\newcolumntype{C}{>{$\displaystyle}c<{$}} 
\newcommand\fixup{\kern-\fontcharic\scriptfont2`\"}
\DeclareMathOperator*{\argmin}{argmin}
\newcommand{\x}{\mathbf{x}}
\newcommand{\s}{\mathbf{s}}
\newcommand{\p}{\mathbf{p}}
\newcommand{\q}{\mathbf{q}}
\newcommand{\br}{\mathbf{r}}
\newcommand{\m}{\mathbf{m}}
\newcommand{\bx}{\mathbf{x}}
\newcommand{\bp}{\mathbf{p}}
\newcommand{\bq}{\mathbf{q}}
\newcommand{\bs}{\mathbf{s}}
\newcommand{\bk}{\mathbf{k}}
\newtheorem{thm}{Theorem}
\newtheorem{prop}{Proposition}
\title{SLERP-TVDRK (STVDRK) Methods for Ordinary Differential Equations on Spheres}
\author{
Shingyu Leung\thanks{Department of Mathematics, the Hong Kong University of Science and Technology, Clear Water Bay, Hong Kong. Email: {\bf masyleung@ust.hk}}
\and
Wai Ming Chau\thanks{Department of Mathematics, the Hong Kong University of Science and Technology, Clear Water Bay, Hong Kong. Email: {\bf wmchau@connect.ust.hk}}
\and
Young Kyu Lee\thanks{Department of Mathematics, the Hong Kong University of Science and Technology, Clear Water Bay, Hong Kong. Email: {\bf ykleeac@connect.ust.hk}}
}
\date{}
\begin{document}
\thispagestyle{plain}
\maketitle

\begin{abstract}
{We mimic the conventional explicit Total Variation Diminishing Runge-Kutta (TVDRK) schemes and propose a class of numerical integrators to solve differential equations on a unit sphere.} Our approach utilizes the exponential map inherent to the sphere and employs spherical linear interpolation (SLERP). These modified schemes, named SLERP-TVDRK methods or STVDRK, offer improved accuracy compared to typical projective RK methods. Furthermore, they eliminate the need for any projection and provide a straightforward implementation. While we have successfully constructed STVDRK schemes only up to third-order accuracy, we explain the challenges in deriving STVDRK-$r$ for $r\ge4$. To showcase the effectiveness of our approach, we will demonstrate its application in solving the eikonal equation on the unit sphere and simulating $p$-harmonic flows using our proposed method.
\end{abstract}

\section{Introduction}
\label{Sec:Introduction}

We consider solving the ordinary differential equation (ODE) on $\mathbb{S}^2$ given by $\p'(t)=f(\p(t),t)$ with an initial condition $\mathbf{p}(0) = \mathbf{p}_0 \in \mathbb{S}^2$. Here, $f:\mathbb{S}^2\times [0,\infty) \rightarrow \mathcal{T}(\mathbb{S}^2)$ satisfies a Lipschitz function, ensuring that the solution to the ODE remains confined to $\mathbb{S}^2$ for all time. This formulation finds natural application in scenarios such as path planning for rigid bodies, where the ODE of the special orthogonal group $SO(3)$ needs to be solved \cite{shi09}. Other applications include quantum field theory within quantum mechanics \cite{adl86}, protein structure modeling \cite{pro14}, molecular dynamics simulation \cite{rap85}, fluid mechanics theory \cite{ghkr06}, fluid flow visualization \cite{hanma95}, computations involving flexible filaments and fibers in complex fluids \cite{tscfro20,stwk21}, differential equations \cite{kouxia18}, and dynamics of rigid bodies \cite{weiterfed06,wil09,udwsch10}. 

Numerically, a straightforward approach to formulate a numerical scheme for the given ODE is first to extend the velocity field $f$ using the expression:
\begin{equation}
f(\mathbf{x},t) = f(\mathcal{P}(\mathbf{x}),t),
\label{Eqn:VelocityExtension}
\end{equation}
where $\mathcal{P}(\mathbf{x}) = \mathbf{x}/\|\mathbf{x}\|$ represents the projection (or the closest point) operator. The ODE is then solved within $\mathbb{R}^3$ rather than $\mathbb{S}^2$. This approach has been utilized in various instances \cite{haiwan96,eicfuh98,hai00,hai01}, and numerous high-order numerical schemes can be employed within this framework. {For the rest of this paper}, we assume that the temporal direction is discretized uniformly with time step size $h$, i.e., $h = t^{n+1} - t^n$, where the numerical integrator updates the solution $\mathbf{p}^n$ to approximate $\mathbf{p}(t^{n+1})$ (denoted as $\mathbf{p}^{n+1}$).

The main challenge associated with this ODE is that there is no guarantee that the solutions stay on $\mathbb{S}^2$ in evolutions. Taking the Runge-Kutta (RK) methods as examples, we might apply the projection operator $\mathcal{P}$ to \textit{the final stage} (but not to any intermediate stage) to enforce this constraint at all $t=t^n$. This procedure gives the following standard projection methods. Some analysis of the properties of these projection methods can be found in, for example, \cite{chmr06}. Except for the projection method based on the RK technique, there are also semi-implicit projection methods \cite{xgwzc20}, a spherical midpoint method \cite{mclmodver17}. Although the projection steps enforce the numerical solution staying on the unit sphere for all time, the resulting numerical solutions could be quite unsatisfactory in practice \cite{hai00,hai01}. Regarding the accuracy of the numerical method, we require that the error introduced by the projection step(s) has the same order as the original standard Cartesian RK method so that the extra projection step(s) would not alter the overall accuracy. This requirement might be challenging to demonstrate for a general projected-RK scheme, especially when the method involves multiple project steps. In particular, we have performed a numerical test in Section \ref{Sec:Examples}, which demonstrates that the norm of the numerical solution from the standard second-order total variation diminishing RK (TVDRK2) scheme converges to the unity with $O(h^3)$ and the PTVDRK3 method might be second-order accurate only if we apply the projection step in all the intermediate stages.

The second class of methods relies more explicitly on the structure of the manifold \cite{wehhau82,potrhe91,crogroyan92,calisezan96,dieloppel98,mun99,lewnig03,celmarowr14}. Therefore, this approach requires at least a local parameterization of the manifold to build a high-order numerical approximation to the solution. In particular, we highlight the high-order Munthe-Kaas RK method as developed in \cite{mun99}. Let $a_{i,j}$ and $b_j$ be the coefficients of an $s$-stage $q$-th order classical RK method with $c_i=\sum_j a_{i,j}$ and let $\lambda(\mathbf{u},\p)=\Lambda(\mbox{exp}(\mathbf{u}),\p)$ be a (left) Lie group action. The approach turns any classical RK method into a numerical algorithm of the same order on a general homogeneous manifold,
\begin{equation}
\left\{
\begin{array}{l}
\q_i = h \sum_{j=1}^s a_{i,j} \s'_j \\
\s_i = f(hc_i,\lambda(\q_i,\p^n)) \\
\s'_i = \mbox{dexpinv}(\q_i,\s_i, q)
\end{array}
\right.
\label{Eqn:RKonManifolds}
\end{equation}
for $i=1,\cdots,s$ and $\p^{n+1}=\lambda\left( h \sum_{j=1}^s b_j \s'_j, \p^n \right)$. The expression $\mbox{dexpinv}(\mathbf{u},\mathbf{v}, q)$ is a $q$-th order truncated approximation for $\mbox{dexp}_{\mathbf{u}}^{-1}(\mathbf{v})$ relating to the inverse of the differential of the exponential mapping. This class of RK methods constructs high-order corrections to the left Lie algebra action to the solution at the $n$-th step. Once this action is determined, it immediately applies to $\p^n$ to obtain the new update, and it requires no interpolation on the underlying manifold. This class of RK methods is similar to the standard projection methods (such as the PRK3 or PRK4) mentioned above. The significant difference is that this method directly computes the action on the manifold rather than in the embedded space, followed by a projection back to the surface.

This paper introduces a new class of numerical integrators designed to solve ODEs on the unit sphere up to third-order accuracy. We adopt the TVDRK methodology \cite{shu88,shuosh88,gotshu98,gotshutad00}, which constructs higher-order numerical solutions using convex combinations of elementary forward Euler-type building blocks. {The TVDRK method is designed to preserve the total-variation diminishing (TVD) property of the underlying ODE. This means the method ensures that the total variation of the numerical solution does not increase over time, which is crucial for solving partial differential equations (PDEs) such as hyperbolic conservation laws and other problems involving discontinuities. Treating it as typical numerical integrators for solving ODE, these methods require only the implementation of a subroutine for the standard forward-Euler update and the computations of a convex combination of several solutions from the intermediate states. This keeps the overall algorithm relatively straightforward to implement. Due to the TVDRK idea,} our integrator avoids relying on multiple corrective steps to approximate the numerical differentiation by sidestepping the need for inverse exponential maps. 

{Following the TVDRK framework, we develop two key ingredients in our proposed integrators. Unlike the conventional projected RK methods that require an additional projection step, our approach leverages the explicit formula of the exponential map on the unit sphere. This approach yields a numerical solution that automatically satisfies the $\mathbb{S}^2$ constraint. Second, we note that simple (weighted) averaging of intermediate solutions will lift the solution off from the unit sphere. Therefore}, we incorporate the concept of spherical linear interpolation (SLERP) to replicate the convex combination of two intermediate stages within conventional TVDRK methods. This construction gives rise to a class of straightforward, high-order, and efficient approximate schemes for solving ODEs on $\mathbb{S}^2$.

The rest of the paper is organized as follows. We summarize the SLERP formula and some projected TVDRK methods in Section \ref{Sec:Background}. The SLERP interpolating approach is the building block of our proposed STVDRK methods, which will be fully discussed in Section \ref{Sec:STVDRK}. Finally, we provide some examples to demonstrate the accuracy of our numerical approach in Section \ref{Sec:Examples}.

\section{Background}
\label{Sec:Background}

We provide the essential components on the Spherical Linear Interpolation (SLERP) formula. This foundational concept will serve as a fundamental building block within our proposed category of TVDRK methods designed for ODE integrator on the unit sphere. Subsequently, we will provide an overview of diverse existing numerical integrators based on the projection concept. 

\subsection{The Spherical Linear Interpolation (SLERP) Formula}
\label{SubSec:SLERP}

This section provides the background on the interpolation of spherical data and introduces the spherical linear interpolation (SLERP) formula \cite{shoemake_85,sola} using a quaternion representation \cite{ham63}. Quaternions are numbers consisting of four dimensions, one real part, and a three-dimensional analogy to {the imaginary part of} complex numbers. A quaternion can be written in many forms: \[\underset{\text{real}}{\boxed{a}} + \underset{\text{imaginary}}{\boxed{b \mathbf{i} + c\mathbf{j} + d\mathbf{k}}} = (a, b, c, d) = (\underset{\text{scalar}}{\boxed{a}}, \underset{\text{vector}}{\boxed{\mathbf{u}}}),\] where \(a, b, c, d \in \mathbb{R}\), $\mathbf{u} = (b, c, d) \in \mathbb{R}^3$. The notations $\mathbf{i}$, $\mathbf{j}$, and $\mathbf{k}$ are extensions of {the imaginary part of} complex numbers.

{Based on the quaternion representation, the SLERP (\textit{Spherical Linear intERPolation}) formula can be expressed by
$$
\mbox{SLERP}(\mathbf{\mathbf{q_a}}, \mathbf{q_b}, t) = (\mathbf{q_a}) ((\mathbf{q_a})^{-1} \mathbf{q_b})^t, \quad t \in [0, 1] \, , 
$$
where we have applied the following quaternion properties
\begin{itemize}
\item Hamilton product: $(a_1, \mathbf{u_1})(a_2, \mathbf{u_2}) = (a_1a_2 - \mathbf{u_1} \cdot \mathbf{u_2}, a_1\mathbf{u_2} + a_2\mathbf{u_1} + \mathbf{u_1} \times \mathbf{u_2})$ where the notation {$\cdot$ and $\times$ denotes the typical dot and cross product.}
\item Inverse map: $\mathbf{q}^{-1} = (a, -\mathbf{u})/(a^2 + b^2 + c^2 + d^2)$.
\item Exponential map: $\exp(a, \mathbf{u}) = \exp(a)(\cos \lVert\mathbf{u}\rVert, ((\sin \lVert\mathbf{u}\rVert)/\lVert\mathbf{u}\rVert) \mathbf{u})$ where $\lVert \cdot \rVert$ denotes the 2-norm.
\item Logarithm map: $ \ln(a, \mathbf{u}) = \left(\ln \sqrt{a^2 + \lVert \mathbf{u} \rVert^2}, \frac{1}{\lVert \mathbf{u} \rVert} 
\arccos \left( \frac{a}{\sqrt{a^2 + \lVert \mathbf{u} \rVert^2}} \right) \mathbf{u} \right)$. 
\item Power map: $(a, \mathbf{u})^{f(t)} = \exp(f(t)\ln(a, \mathbf{u}))$.
\end{itemize}
}

\begin{table} 
\setlength\tabcolsep{0pt} 
\begin{tabular*}{\textwidth}{@{\extracolsep{\fill}}  *{2}{C}}
\toprule
$$
\mbox{PFE:} 
\begin{array}{l}
\p^{n+1} = \mathcal{P}(\p^n + h f(\p^n,t^n))
\end{array}
$$
& 
$$
\mbox{PRK2:} \left\{
\begin{array}{l}
\s_1 = f(\p^n,t^n) \, , \, \q_1=\p^n+ h \s_1 \\
\s_2 = f(\q_1,t^{n+1})  \\
\p^{n+1} = \mathcal{P}(\p^n + \frac{1}{2} h (\s_1+\s_2))
\end{array}
\right.
$$
\\ \midrule
$$
\mbox{PRK3:}\left\{
\begin{array}{l}
\s_1 = f(\p^n,t^n) \, , \, \q_1=\p^n+\frac{1}{2}h \s_1 \\
\s_2 = f(\q_1,t^{n+1/2}) \\
\q_2=\p^n+2h \s_2 - h \s_1 \\
\s_3 = f(\q_2,t^{n+1})  \\
\p^{n+1} = \mathcal{P}(\p^n + \frac{h}{6} ( \s_1+4\s_2+\s_3 ))
\end{array}
\right.
$$
&
$$
\mbox{PRK4:}\left\{
\begin{array}{l}
\s_1 = f(\p^n,t^n) \, , \, \q_1=\p^n+\frac{1}{2}h \s_1 \\
\s_2 = f(\q_1,t^{n+1/2}) \, , \, \q_2=\p^n+\frac{1}{2}h \s_2  \\
\s_3 = f(\q_2,t^{n+1/2}) \, , \, \q_3=\p^n+h \s_3  \\
\s_4 = f(\q_3,t^{n+1})  \\
\p^{n+1} = \mathcal{P}(\p^n + \frac{h}{6} ( \s_1+2\s_2+2\s_3+\s_4 ))
\end{array}
\right.
$$
\\ 
\midrule
$$
\mbox{PTVDRK2:}\left\{
\begin{array}{l}
\q_1 = \p^n+h f(\p^n,t^n) \\
\q_2 = \q_1+h f(\q_1,t^{n+1}) \\
\p^{n+1} = \mathcal{P}(\frac{1}{2} (\p^n+\q_2))
\end{array}
\right.
$$
&
$$
\mbox{PTVDRK2':} 
\left\{
\begin{array}{l}
\q_1 = \mathcal{P}(\p^n+h f(\p^n,t^n)) \\
\q_2 = \mathcal{P}(\q_1+h f(\q_1,t^{n+1})) \\
\p^{n+1} = \mathcal{P}(\frac{1}{2} (\p^n+\q_2))
\end{array}
\right.
$$
\\ \midrule
$$
\mbox{PTVDRK3:}\left\{
\begin{array}{l}
\q_1 = \p^n+h f(\p^n,t^n) \\
\q_2 = \q_1+h f(\q_1,t^{n+1}) \\
\q_3 = \frac{1}{4} (3\p^n+\q_2) \\
\q_4 = \q_3+h f(\q_3,t^{n+1/2}) \\
\p^{n+1} = \mathcal{P}(\frac{1}{3}( \p^n+2\q_4))
\end{array}
\right.
$$
&
\mbox{PTVDRK3':} 
\left\{
\begin{array}{l}
\q_1 = \mathcal{P}(\p^n+h f(\p^n,t^n)) \\
\q_2 = \mathcal{P}(\q_1+h f(\q_1,t^{n+1})) \\
\q_3 = \mathcal{P}(\frac{1}{4} (3\p^n+\q_2)) \\
\q_4 = \mathcal{P}(\q_3+h f(\q_3,t^{n+1/2}) )\\
\p^{n+1} = \mathcal{P}(\frac{1}{3}( \p^n+2\q_4))
\end{array} 
\right.
\\
\addlinespace
\bottomrule
\end{tabular*}
\caption{(Section \ref{SubSec:ProjectedTVDRK}) {Some existing projected RK methods that we will compare with our proposed STVDRK methods.}}
\label{Table:PRK}
\end{table}

\subsection{Projected RK Methods}
\label{SubSec:ProjectedTVDRK}

By incorporating the projection concept into RK methods, we derive the following set of projected RK methods that effectively generate a flow on the unit sphere. 
{Although these methods are well-known to the community of geometrical integrations, we summarize these methods in Table \ref{Table:PRK} for a comprehensive overview.} The first two rows encompass simple projected RK methods, including projected-Forward Euler (PFE), projected-RK2 (PRK2), projected-RK3 (PRK3), and projected-RK4 (PRK4). The third row showcases projected-TVDRK2 (PTVDRK2) and its variant PTVDRK2', where projection is applied to each individual stage. The final row demonstrates projected-TVDRK3 (PTVDRK3) and its counterpart PTVDRK3', where projection is also implemented at every individual stage. The PTVDRK2' and PTVDRK3' methods involve a projection step at all intermediate stages and are referred to as internal projection methods \cite{hai11}. 
It is important to note that the PTVDRK2 scheme is equivalent to PRK2. However, upon imposing the projection step for all intermediate stages, PTVDRK2' emerges as a distinct scheme. Notably, this PTVDRK2' scheme differs from PTVDRK2, even when utilizing the straightforward velocity extension described by equation (\ref{Eqn:VelocityExtension}). Specifically, the averaging steps are not equivalent between them. The quantities $\mathcal{P}(\frac{1}{2} (\mathbf{p}^n+\mathbf{q}_2))$ and $\mathcal{P}(\frac{1}{2} (\mathbf{p}^n+\mathcal{P}(\mathbf{q}_2)))$ generally yield different results. 

\section{Our Proposed Numerical Integrators}
\label{Sec:STVDRK}

This section introduces our proposed SLERP-TVDRK methods, abbreviated as STVDRK. The core idea involves mimicking the conventional TVDRK method but with two key alterations. Firstly, we replace the basic forward Euler step with the exponential map. This section introduces the exact solution for \textit{constant} motion on the unit sphere. Secondly, we modify the linear combination step by incorporating the Spherical Linear Interpolation (SLERP) interpolation technique. Comprehensive details will be presented in Sections \ref{SubSec:SFE} and \ref{SubSec:STVDRK}. Subsequent sections will provide insights into the properties of these methods.

\subsection{Spherical Forward Euler (SFE)}
\label{SubSec:SFE}

Consider moving the data point $\p^n$ with a nonzero velocity $\s = f(\p^n,t^n)$ for the period $h$. The arrival location on the unit sphere has an explicit formula given by $\p^{n+1} = \mbox{exp}_{\p^{n}}(h\s)$ where $\mbox{exp}_{\p}: T_{\p}\mathbb{S}^2 \rightarrow \mathbb{S}^2$ is the exponential map with $\mbox{exp}_{\p}(\s)=\gamma(1)$ where $\gamma$ being the unique geodesic satisfying $\gamma(0)=\p$ and $\gamma'(0)=\s$. Mathematically, we have the expression explicitly for the unit sphere $\mbox{exp}_{\p}(\s) = \cos(\|\s\|) \p + \sin(\|\s\|) \frac{\s}{\|\s\|}$, and therefore, we arrive the spherical forward Euler (SFE) method
$$
\mbox{Spherical FE (SFE): } \, \left\{
\begin{array}{l}
\s_1 = f(\p^n,t^n) \\
\p^{n+1} = \cos(h \|\s_1\|) \p^n + \sin(h\|\s_1\|) \frac{\s_1}{\|\s_1\|} \, .
\end{array}
\right.
$$
The SFE scheme emulates the conventional forward Euler method for solving ODEs in Cartesian space. However, in contrast to permitting the solution to reach any arbitrary point in the entire space after a single timestep, the SFE method respects the spherical geometry by ensuring that the solution remains confined to $\mathbb{S}^2$. We will provide a proof in Theorem \ref{Thm:SFE} to demonstrate that this scheme possesses local second-order accuracy, resulting in a globally first-order accurate solution.

\subsection{SLERP-TVDRK (STVDRK)}
\label{SubSec:STVDRK}
{Building upon the foundation of the SFE scheme, we propose the following STVDRK2 and STVDRK3 methods for solving ODEs on the unit sphere,}
$$
{
\mbox{STVDRK2:} \,
\left\{
\begin{array}{l}
\q_1 = \mbox{exp}_{\p^n}(hf(\p^n,t^n)) \\
\q_2 = \mbox{exp}_{\q_1}(hf(\q_1,t^{n+1})) \\
\p^{n+1} = \mbox{SLERP} ( \p^n,\q_2,0.5)
\end{array}
\right.
\quad \mbox{ and } \quad
\mbox{STVDRK3:} \,
\left\{
\begin{array}{l}
\q_1 = \mbox{exp}_{\p^n}(hf(\p^n,t^n)) \\
\q_2 = \mbox{exp}_{\q_1}(hf(\q_1,t^{n+1})) \\
\q_3 = \mbox{SLERP}( \p^n,\q_2,0.25) \\
\q_4 = \mbox{exp}_{\q_3}(hf(\q_3,t^{n+1/2})) \\
 \p^{n+1} = \mbox{SLERP}( \p^n,\q_4,2/3)
\end{array}
\right.
}
$$
The STVDRK2 method emulates the conventional TVDRK2 approach, {or a Trapezoidal method}, which involves two applications of the exponential map and the construction of a single SLERP operation. The initial step consists in substituting the first two forward Euler (FE) steps with SFE approximations. The pivotal aspect of the TVDRK2 method lies in the averaging step that yields the updated solution at the subsequent time level. However, directly applying the averaging concept to spherical ODEs could result in an approximation that violates the $\mathbb{S}^2$-constraint. Nevertheless, the averaging step can be interpreted as computing the midpoint along the geodesic formed by a linear interpolation between the initial and intermediate values. Thus, we replace the mean calculation by employing the SLERP technique on the unit sphere. This approach yields the midpoint along the geodesic path. {Similarly, the STVDRK3 method requires substituting all the forward Euler (FE) steps in the conventional TVDRK3 approach with SFE approximations and replacing any convex combination that arises between intermediate solutions with a SLERP incorporating a suitable parameter.}

To give an intuition of why the STVDRK2 scheme is second-order accurate, we consider a simple setup where the motion remains confined to the great circle of the unit sphere. The STVDRK2 scheme employs intermediate stages that lie on the unit sphere. For numerical representation, we introduce the angle representation. Consider the velocity given by $\theta'=\theta$, leading to the exact solution $\theta(h)=\theta_0 \exp(h) = (1+h+\frac{1}{2}h^2+\frac{1}{6}h^3+\cdots) \theta_0$. The SFE step introduces a polar angle change denoted by $\Delta \theta = h\theta^n$, resulting in $\theta^{n+1} = \left(1 + h \right) \theta^n$, indicating a first-order scheme. Following the second STVDRK stage, the solution becomes $\theta_2 = \left(1 + 2h + h^2\right) \theta^n$. The averaging step yields the final solution $\theta^{n+1} = \left(1 + h + \frac{1}{2} h^2\right) \theta^n$, achieving second-order accuracy. Translating back to the coordinates $\p^{n+1}$, the averaging step of the two intermediate stages is equivalent to SLERP interpolation.

\subsection{STVDRK-$r$ for $r\ge4$}

Unfortunately, the extension to $r$-th order methods with $r\ge4$ is not as straightforward. The difficulty lies in that many intermediate stages may be required to construct the TVDRK methods. Canceling higher-order terms in the Taylor expansion may not directly carry over from Cartesian space to the unit sphere. We have explored various $s$-stage order-4 explicit TVDRK methods (also known as strong-stability preserving (SSP) methods) but have only observed numerically up to third-order convergence. This section summarizes and reports these numerical schemes. Further careful investigations will be necessary, and they will be an important focus of future work.

In a more general context, this approach can be extended to any $s$-stage explicit TVDRK method for scalar ODEs \cite{shu88,shuosh88,gotshu98,gotshutad00} expressed in the form:
$$
\left\{
\begin{array}{l}
u^{(i)} = \sum_{k=0}^{i-1} \left[ \alpha_{ik} u^{(k)} + \beta_{ik} h f(u^{(k)}) \right] \, , \, i=1,2,\cdots,s, \\
u^{(0)} = u^n \mbox{ and } u^{n+1}=u^{(s)}
\end{array}
\right.
$$
where $\alpha_{ik}\ge 0$ with $\alpha_{ik}=0$ only if $\beta_{ik}=0$. This method adheres to the consistency condition $\sum_{k=0}^{i-1} \alpha_{ik}=1$ for $i=1,2,\cdots,s$. Specifically, when $\alpha_{ik},\beta_{ik}>0$, the $s$-stage method can be represented as a combination of forward Euler steps with varying time steps determined by $\beta_{ik}h/\alpha_{ik}$. Consequently, in line with the STVDRK scheme, we extend the update from:
$$
u^{(k)} + \frac{\beta_{ik} h }{\alpha_{ik}} f(u^{(k)}) \, \mbox{ to } \,
\q_{ik}=\mbox{exp}_{\q^{(k)}}\left[ \frac{\beta_{ik}h }{\alpha_{ik}} f(\q^{(k)}) \right]
$$ 
for $k=0,1,\cdots,i-1$ and $i=1,2,\cdots,s$, where $\q^{(k)}$ represents solutions at intermediate stages. However, the linear combination of $k$ from 0 to $i-1$ for more than two terms is not as straightforward. We have explored several approaches but have not been able to achieve satisfactory results. Obtaining high-order STVDRK-$r$ methods for $r\ge4$ will be an important focus of future work. We outline some approaches that we have tested.

One simple way to perform the convex combination is to apply SLERP to pairs of points progressively. For example, assuming that none of $\alpha_{i0}$ equals to 0, this approach leads to the following algorithm,
\begin{eqnarray*}
&& \q_{0}=\p^n \\
&& \left\{
\begin{array}{l}
\q_{ik}=\mbox{exp}_{\q_{k}}\left[ \frac{\beta_{ik}h }{\alpha_{ik}} f(\q_{k}) \right] \, \mbox{ for $k=0,1,\cdots,i-1$} \\
\br_{i1} = \mbox{SLERP}\left(\q_{i0},\q_{i1},\frac{\alpha_{i1}}{\alpha_{i0}+\alpha_{i1}}\right) \\
\br_{i2} = \mbox{SLERP}\left(\br_{i1},\q_{i2},\frac{\alpha_{i2}}{\alpha_{i0}+\alpha_{i1}+\alpha_{i2}}\right) \\
\quad\quad \vdots\\
\q_{i} = \br_{i,i-1} = \mbox{SLERP}\left(\br_{i,i-2},\q_{i,i-1},\frac{\alpha_{i,i-1}}{\alpha_{i0}+\cdots+\alpha_{i,i-1}}\right) \, \mbox{ for $i=1,\cdots,s$}
\end{array}
\right. \\
& & \p^{n+1}=\q_{s}\, .
\end{eqnarray*}
When a coefficient $\alpha_{ik}=0$, we can skip the corresponding SLERP interpolation step, as there is no contribution from that stage to the intermediate solution. Additionally, note that in the final intermediate step, we can utilize the consistency constraint to rewrite the expression as follows:
$$
\mbox{SLERP}\left(\br_{i,i-2},\q_{i,i-1},\frac{\alpha_{i,i-1}}{\alpha_{i0}+\cdots+\alpha_{i,i-1}}\right) =
\mbox{SLERP}\left(\br_{i,i-2},\q_{i,i-1},\alpha_{i,i-1}\right) \, .
$$
For example, based on the method as suggested in \cite{gotshu98}, we have the following STVDRK4 scheme,
$$
\left\{
\begin{array}{l}
\q_1 = \mbox{exp}_{\p^n}[0.500000000000000hf(\p^n)] \\ 
\q_{20} = \mbox{exp}_{\p^n}\left[- 1.065687335761845hf(\p^n)\right] \, , \, \q_{21} = \mbox{exp}_{\q_1}\left[1.068486941019387hf(\q^1)\right] \\
\q_2 = \mbox{SLERP}\left[ \q_{20},\q_{21},0.594375000000000\right] \\
\q_{30} = \mbox{exp}_{\p^n}\left[-0.947054029524533hf(\p^n)\right] \, ,\, \q_{31} = \mbox{exp}_{\q_1}\left[-1.065495848810696hf(\q^1)\right] \\
\q_{32} = \mbox{exp}_{\q_2}\left[1.066666666666667hf(\q_2)\right] \, , \, \br_{31} = \mbox{SLERP}\left[ \q_{30},\q_{31},0.917544541224197\right] \\
\q_3 = \mbox{SLERP}\left[ \br_{31},\q_{32},0.738093750000000\right] \\
\q_{40} = \mbox{exp}_{\p^n}\left[0.500000000000000hf(\p^n)\right] \, , \, \q_{41} = \mbox{exp}_{\q_1}\left[0.816060062020566hf(\q^1)\right] \\
\q_{43} = \mbox{exp}_{\q_3}\left[0.500000000000000hf(\q_3)\right] \, ,\, \br_{41} = \mbox{SLERP}\left[ \q_{40},\q_{41},0.505236249690773\right] \\
\br_{42} = \mbox{SLERP}\left[ \br_{41},\q_2,0.393650000000000\right] \\
\p^{n+1} = \mbox{SLERP}\left[ \br_{42},\q_{43},0.333333333333333\right]
\end{array}
\right.
$$
Although we have some negative coefficients, as in the conventional TVDRK method, we can follow the same implementation to determine the exponential map. These negative $\beta_{i,j}$'s do not impose any computational challenges to the algorithm. The major drawback, however, is the efficiency of the overall method. It involves ten calls of the exponential maps and six evaluations of SLERP. The operation count is over three times of the STVDRK3. 

The main issue of this approach is that the progressive SLERP operator is \textbf{not} associative \cite{browor92,busfil01}. Furthermore, it has been proven in \cite{browor92} that no intrinsic definition of spherical averages satisfies associativity. Specifically, we take the above STVDRK4 scheme as an example. When constructing 
$\q_3$, it is possible to reverse the order and perform two SLERP operations as follows:
$$
\begin{array}{l}
\br_{31}' = \mbox{SLERP}\left[ \q_{32},\q_{31},0.245614850055866\right] \\
\q_3' = \mbox{SLERP}\left[ \br_{31}',\q_{30},0.021595600000000\right] \, .
\end{array}
$$
However, we have $\q_3\ne\q_3'$ in general, and these two implementations will produce two distinct numerical results.

The class of RK methods we introduce here are sometimes referred to as $s$-stage order-$s$ strong-stability preserving Runge-Kutta (SSPRK) methods. Alternatively, one might follow the approach outlined in \cite{spiruu02} to develop $s$-stage order-$p$ SSPRK methods with $s>p$ for solving ODEs on the unit sphere. In particular, 
we develop based on the $5$-stages order-$4$ SSPRK method, denoted by SSPRK(5,4) , and obtain the following SSSPRK(5,4) scheme,
$$
\left\{
\begin{array}{l}
\q_1 = \mbox{exp}_{\p^n}[0.39175222700392hf(\p^n)] \\ 
\q_{21} = \mbox{exp}_{\q_1}\left[0.663050807590193hf(\q_1)\right] \, , \, \q_2 = \mbox{SLERP}\left[ \p^n,\q_{21},0.55562950593266\right] \\
\q_{32} = \mbox{exp}_{\q_2}\left[0.663050807607172hf(\q_2)\right] \, , \, \q_3 = \mbox{SLERP}\left[ \p^n,\q_{32},0.37989814861460\right] \\
\q_{43} = \mbox{exp}_{\q_3}\left[0.663050807601060hf(\q_3)\right] \, , \, \q_4 = \mbox{SLERP}\left[ \p^n,\q_{43},0.82192004589227\right] \\
\q_{53} = \mbox{exp}_{\q_3}\left[0.663050807634935hf(\q_3)\right] \, , \, \q_{54} = \mbox{exp}_{\q_4}\left[0.648818932180072hf(\q_4)\right] \\
\br_{52} = \mbox{SLERP}\left[ \p^n,\q_2, 0.986961045402787\right] \, , \, \br_{53} = \mbox{SLERP}\left[ \br_{52},\q_{53},0.195804064212316\right] \\
\p^{n+1} = \mbox{SLERP}\left[ \br_{53},\q_{54},0.348336757736944\right]
\end{array}
\right.
$$
This scheme requires six calls of the exponential map and six calls of SLERP interpolations. As discussed above, however, the current approach of implementing the convex combination creates an ambiguity due to the non-associative property in the SLERP operations. There is another challenge in this numerical scheme. Due to the loss in numerical precision in rewriting the scheme in the form of \cite{shuosh88}, we have observed that it might not be possible to maintain double precision accuracy in the computations. 
Also, we will demonstrate in the numerical examples that the computed solutions do not exhibit an error of machine epsilon but rather on the order of $O(10^{-10})$ as we refine the timestep.

We have also considered the SSPRK(10,4) scheme as developed in \cite{ket08}:
$$
\left\{
\begin{array}{l}
u^{(1)} = u^n \, , \, u^{(i+1)} = u^{(i)} +\frac{h}{6} f(u^{(i)}) \, \mbox{ for $i=1,2,3,4$} \\
u^{(6)} = \frac{3}{5} u^n +\frac{2}{5} \left[ u^{(5)}+ \frac{h}{6} f(u^{(5)}) \right] \, , \, u^{(i+1)} = u^{(i)} +\frac{h}{6} f(u^{(i)})  \, \mbox{ for $i=6,7,8,9$} \\
u^{n+1} = \frac{1}{25} u^n + \frac{9}{25} \left[ u^{(5)}+ \frac{h}{6} f(u^{(5)}) \right] + \frac{3}{5} \left[ u^{(10)}+ \frac{h}{6} f(u^{(10)}) \right] 
\end{array}
\right.
$$
and have investigated the following 10-stages order-4 method, SSSPRK(10,4) scheme:
$$
\left\{
\begin{array}{l}
\q_1 = \p^n \, , \, \q_{i+1} = \mbox{exp}_{\q_i}\left[\frac{h}{6} f(\q_i)\right] \, \mbox{ for $i=1,2,3,4$} \\
\q_{65} = \mbox{exp}_{\q_5}\left[\frac{h}{6} f(\q_5)\right] \, , \q_6 = \mbox{SLERP}\left[ \p^n,\q_{65},0.4 \right] \, , \,
\q_{i+1} = \mbox{exp}_{\q_i}\left[\frac{h}{6} f(\q_i)\right] \, \mbox{ for $i=6,7,8,9$} \\
\q_{10,10} = \mbox{exp}_{\q_{10}}\left[\frac{h}{6} f(\q_{10})\right] \, ,
\br_{10,5} = \mbox{SLERP}\left[ \p^n,\q_{65},0.9\right] \, , \, \p^{n+1} = \mbox{SLERP}\left[ \br_{10,5},\q_{10,10},0.6\right]
\end{array}
\right.
$$

An alternative approach to replace the convex combination is to determine the Fr\'echet mean \cite{busfil01,afstrovid13} of points on the unit sphere. This involves finding the minimizer of the Fr\'echet function, given by $\q=\argmin_{\p \in \mathbb{S}^2} \sum_{i=1}^n w_i \, \left[\mbox{dist}(\p,\p_i)\right]^2$ where $w_i$ are the weighting such that $\sum_{i=1}^n w_i=1$, and the function $\mbox{dist}(\p,\p_i)$ is the geodesic distance between two points $\q$ and $\p_i$ on the unit sphere. For example, $\q_3$ in the STVDRK4 can be determined by
$$\scriptsize
\argmin_{\p \in \mathbb{S}^2} \left[ 
0.0215956 \, \mbox{dist}(\p,\q_{30})^2 + 
0.24031065 \, \mbox{dist}(\p,\q_{31})^2  + 
0.73809375 \, \mbox{dist}(\p,\q_{32})^2 \right] \, .
$$
Unlike the corresponding problem in Cartesian space, this optimization problem generally does not have a closed-form solution. Therefore, the usual way to determine the Fr\'echet mean is through iterative methods. Although there are gradient descent approaches for solving this problem \cite{busfil01,afstrovid13,eichotwie19}, these schemes can make the overall algorithm for solving the ODE problem very inefficient. A more direct approach is to apply the same averaging as in conventional Cartesian space and then project the result back onto the unit sphere. This approach corresponds to using the initial condition in the iterative approach for the Fr\'echet mean. This can significantly accelerate the numerical computations. However, we have observed that neither approach yields a numerical solution with the expected accuracy. 

\subsection{{Accuracy}}
\label{SubSec:Accuracy}

Since the exact solution stays on the unit sphere for all time, it will be essential to understand the change in the norm of the solution. This section considers the local error in the solution norm and tries to see how fast these numerical solutions converge to the sphere. Indeed, if we have an estimation of the convergence rate to the exact solution, we already have a corresponding estimate of the convergence rate to the sphere using the triangle inequality. But as we will see later, we can improve these estimations for some numerical schemes.

For simplicity, we consider autonomous flow. Let $\p^{\tiny \mbox{FE}} = \p^n+h \s_0$, $\p^{\tiny \mbox{SFE}} = \mbox{exp}_{\p^n}(h\s_0)$ and $\p^{\tiny \mbox{RK2}}$ be the \textbf{one-step} numerical solution obtained by the standard forward-Euler (FE) method (without projection), the spherical forward-Euler (SFE) method where $\s_0=f(\p^n)$ and the standard TVDRK2, respectively. Since there is no projection in determining $\p^{\tiny \mbox{FE}}$, the solution does not lie on $\mathbb{S}^2$. We further define $\p^{\tiny \mbox{PFE}}=\mathcal{P}(\p^{\tiny \mbox{FE}})$ as the projected-FE solution.

We have the following properties considering the norms of the approximations produced by different numerical integrators.
\begin{itemize}
\item $\left\|\p^{\tiny \mbox{FE}}\right\| = 1 +  O(h^2)$. Since $\p^n$ and $\s_0$ are orthogonal, we have $\|\p^{\tiny \mbox{FE}}\|=\sqrt{1+h^2}$ and the claim follows.
\item $\left\|\p^{\tiny \mbox{RK2}}\right\| = 1+ O(h^4)$. The proof of this statement is technical and is given in Appendix A. This result is surprising since we expect the RK2 solution should only have $O(h^3)$ local error bound. However, the accuracy in the solution norm has improved by one order.
\item $ \left\|\p^{\tiny \mbox{SFE}}\right\| =1 $. This is because the construction based on the exponential map stays on $\mathbb{S}^2$ by default.
\item Given that SLERP generates an interpolant on $\mathbb{S}^2$, solutions produced by the STVDRK method inherently stay on $\mathbb{S}^2$.
\end{itemize}

We also have the following properties concerning the accuracy of the numerical solutions. {We skip the proof of the proposition since it can be easily shown using Taylor's expansion and that $\p^n$ is orthogonal to $\s_0$.} Let $\p^*$ be the exact solution after one timestep.

\begin{prop}
$\left\| \p^{\tiny \mbox{FE}} - \p^{\tiny \mbox{SFE}} \right\| = O(h^2)$. 
\label{Prop:pFEpSFE}
\end{prop}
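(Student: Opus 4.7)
The plan is to prove Proposition \ref{Prop:pFEpSFE} by direct computation, expanding both approximations in $h$ and comparing term by term. Since both $\p^{\tiny \mbox{FE}}$ and $\p^{\tiny \mbox{SFE}}$ have closed-form expressions in terms of $\p^n$ and $\s_0$, the result will fall out of a Taylor expansion of the sine and cosine appearing in the exponential map formula.

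First I would substitute the definitions:
\[
\p^{\tiny \mbox{SFE}} - \p^{\tiny \mbox{FE}} \;=\; \bigl(\cos(h\|\s_0\|)-1\bigr)\p^n \;+\; \left(\frac{\sin(h\|\s_0\|)}{\|\s_0\|} - h\right)\s_0.
\]
Next I would Taylor expand the two scalar coefficients around $h=0$, obtaining
\[
\cos(h\|\s_0\|)-1 = -\tfrac{1}{2}h^2\|\s_0\|^2 + O(h^4), \qquad \frac{\sin(h\|\s_0\|)}{\|\s_0\|} - h = -\tfrac{1}{6}h^3\|\s_0\|^2 + O(h^5).
\]
Substituting back gives
\[
\p^{\tiny \mbox{SFE}} - \p^{\tiny \mbox{FE}} = -\tfrac{1}{2}h^2\|\s_0\|^2\,\p^n - \tfrac{1}{6}h^3\|\s_0\|^2\,\s_0 + O(h^4).
\]

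Finally, taking the Euclidean norm and using $\|\p^n\|=1$, the leading contribution is $\tfrac{1}{2}h^2\|\s_0\|^2$, with all remaining terms of order $h^3$ or higher (here the orthogonality of $\p^n$ and $\s_0$ makes the cross term in $\|\cdot\|^2$ vanish, so no cancellation is lost or gained at the leading order). This yields $\|\p^{\tiny \mbox{FE}} - \p^{\tiny \mbox{SFE}}\| = O(h^2)$ as claimed.

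I do not anticipate a genuine obstacle: the statement is essentially the first nontrivial term in the Taylor expansion of the exponential map against its Euclidean linearization. The only point worth a sentence of care is noting which power of $h$ is really leading, since one might at first glance worry that both coefficients could contribute at $O(h^2)$; the sine term being $O(h^3)$ confirms that the $-\tfrac{1}{2}h^2\|\s_0\|^2\p^n$ piece alone controls the norm.
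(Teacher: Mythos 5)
Your proof is correct and follows exactly the route the paper indicates: the paper explicitly skips the proof, noting only that it "can be easily shown using Taylor's expansion and that $\p^n$ is orthogonal to $\s_0$," which is precisely your computation. The expansion of the cosine and sine coefficients and the identification of the leading $-\tfrac{1}{2}h^2\|\s_0\|^2\,\p^n$ term are all accurate, so nothing further is needed.
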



\begin{thm}
$\|\p^{\tiny \mbox{SFE}} - \p^*\| =O(h^2)$.
\label{Thm:SFE}
\end{thm}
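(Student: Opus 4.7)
The plan is a direct Taylor expansion comparison. I want to expand both $\p^{\tiny \mbox{SFE}}$ and $\p^*$ about $h=0$ up through first order in $h$, show the constant and linear-in-$h$ terms match exactly, and then conclude that the difference is controlled by the second-order remainders, each of which is $O(h^2)$.

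First I would expand $\p^{\tiny \mbox{SFE}}$ using the closed form $\p^{\tiny \mbox{SFE}} = \cos(h\|\s_0\|)\p^n + \sin(h\|\s_0\|)\s_0/\|\s_0\|$. Using the scalar Taylor series $\cos(x) = 1 - x^2/2 + O(x^4)$ and $\sin(x) = x - x^3/6 + O(x^5)$ with $x = h\|\s_0\|$, this gives
\[
\p^{\tiny \mbox{SFE}} = \p^n + h\,\s_0 \;-\; \tfrac{h^2\|\s_0\|^2}{2}\,\p^n \;+\; O(h^3).
\]
In particular $\p^{\tiny \mbox{SFE}} = \p^n + h\s_0 + O(h^2)$, where the implicit constant depends only on $\|\s_0\|$ (hence on a local bound of $f$).

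Next I would expand the exact solution. By the ODE, $\p^*(t^n) = \p^n$ and $(\p^*)'(t^n) = f(\p^n,t^n) = \s_0$, so Taylor's theorem with remainder gives
\[
\p^* = \p^n + h\,\s_0 + O(h^2),
\]
with the $O(h^2)$ remainder controlled by a bound on $\p''$ on the interval $[t^n,t^n+h]$; such a bound exists because $f$ is Lipschitz and smooth enough on its domain, which makes $t \mapsto \p(t)$ twice continuously differentiable.

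Subtracting the two expansions, the $\p^n$ and $h\,\s_0$ terms cancel, and what remains is the difference of two $O(h^2)$ quantities, hence
\[
\|\p^{\tiny \mbox{SFE}} - \p^*\| = O(h^2),
\]
which is precisely the claim. There is no real obstacle here: the argument is essentially the standard consistency check for a one-step method, and both expansions agreeing to first order in $h$ is exactly the statement that SFE reproduces the forward-Euler increment to leading order before bending it back onto $\mathbb{S}^2$ via the exponential map. If desired, one can combine this estimate with Proposition~\ref{Prop:pFEpSFE} and the known local accuracy of FE to give an alternative proof by writing $\p^{\tiny \mbox{SFE}} - \p^* = (\p^{\tiny \mbox{SFE}} - \p^{\tiny \mbox{FE}}) + (\p^{\tiny \mbox{FE}} - \p^*)$ and applying the triangle inequality, with both pieces bounded by $O(h^2)$.
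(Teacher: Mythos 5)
Your proof is correct, but it takes a genuinely different route from the paper's. The paper argues by the three-term triangle inequality $\|\p^{\tiny \mbox{SFE}} - \p^*\| \le \|\p^{\tiny \mbox{SFE}} - \p^{\tiny \mbox{FE}}\| + \|\p^{\tiny \mbox{FE}} - \p^{\tiny \mbox{PFE}}\| + \|\p^{\tiny \mbox{PFE}} - \p^*\|$: the first term is bounded by Proposition~\ref{Prop:pFEpSFE}, the second by the estimate $\|\p^{\tiny \mbox{FE}}\| = 1 + O(h^2)$ established earlier in that section, and the third by citing the known local error bound of the projected forward-Euler method. Your argument instead Taylor-expands the explicit exponential-map formula and the exact flow separately and observes they agree through the $h\,\s_0$ term, which is more self-contained --- it needs no external reference for the PFE local error and in fact exhibits the leading discrepancy explicitly (the $-\tfrac{h^2\|\s_0\|^2}{2}\p^n$ bending term versus the $\tfrac{h^2}{2}\p''$ term of the exact solution). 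One small point of care: you invoke a bound on $\p''$, which requires slightly more than the Lipschitz hypothesis the paper states for $f$; you can avoid this entirely by writing $\p^* - \p^n - h\s_0 = \int_{t^n}^{t^n+h}\bigl[f(\p(s),s) - f(\p^n,t^n)\bigr]\,ds$ and bounding the integrand by the Lipschitz constant times $O(h)$. Your closing remark about the two-term decomposition through $\p^{\tiny \mbox{FE}}$ is closest in spirit to what the paper actually does, except that the paper inserts the additional intermediate point $\p^{\tiny \mbox{PFE}}$ so as to reuse its norm computations and the standard projected-Euler result.
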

\begin{proof}
{
We have $\|\p^{\tiny \mbox{SFE}} - \p^*\| \le \|\p^{\tiny \mbox{SFE}} - \p^{\tiny \mbox{FE}} \| + \| \p^{\tiny \mbox{FE}} - \p^{\tiny \mbox{PFE}}\| + \| \p^{\tiny \mbox{PFE}} - \p^*\|$. We can bound the first term on the right-hand side by $O(h^2)$ using Proposition \ref{Prop:pFEpSFE}. 
Since $\left| \left\|\p^{\tiny \mbox{FE}}\right\| - 1 \right|$ is $O(h^2)$, we can conclude that $\| \p^{\tiny \mbox{FE}} - \p^{\tiny \mbox{PFE}}\|$ is $O(h^2)$. Moreover, the projected-FE method satisfies the local error bound $\|\p^{\tiny \mbox{PFE}} - \p^*\| =O(h^2)$ (see for example \cite{hai11}). Then, the statement follows.
}
\end{proof}

\subsection{{Stability Conditions}}
\label{SubSec:Stability}

This section discusses the numerical constraints on the time step size in solving differential equations. 

In $\mathbb{R}^n$, a numerical integrator might require an A-stability condition that prevents oscillatory behavior in stiff systems. However, the discussion is not as straightforward for the ODE on $\mathbb{S}^2$. {We first consider an A-stability-like condition for our STVDRK schemes in this section.}

For example, if we mimic the case as in $\mathbb{R}^3$ and consider a linear system 
\begin{equation}
\mathbf{p}'(t)=M\mathbf{p}(t)
\label{Eqn:LinearModelProblem}
\end{equation}
with a 3-by-3 matrix $M$ on the sphere, we need to constrain the solution $\mathbf{p}(t)\in\mathbb{S}^2$ by requiring that $M$ be skew-symmetric, i.e., $M^T=-M$. Without loss of generality, we can express $M$ as
$$
M=\left(
\begin{array}{ccc}
0 & 0 & 0 \\
0 & 0 & \alpha \\
0 & -\alpha & 0
\end{array}
\right) \, ,
$$
where $\alpha$ is a constant in $\mathbb{R}$. This implies that the only possible motion is a rigid body rotation along the $x$-axis with an angular velocity of $|\alpha|$. In the general case, we can use an orthogonal change of coordinate system to express $M$ in a different form that rotates the $x$-axis to a different direction. However, the resulting motion remains a rigid body rotation on the unit sphere. Therefore, in contrast to the situation in $\mathbb{R}^3$, the linear ODE on $\mathbb{S}^2$ does not possess an attractive stable equilibrium for the typical stability analysis.

Instead of considering a skew-symmetric matrix, we will look at a general linear ODE $\mathbf{p}'(t)=M\mathbf{p}(t)$ in $\mathbb{R}^3$. We will project the solution onto $\mathbb{S}^2$. This gives us a \textit{nonlinear} ODE:
\begin{equation}
\mathbf{q}'=M\mathbf{q} - \mathbf{q}\mathbf{q}^T M \mathbf{q} = (I - \mathbf{q}\mathbf{q}^T ) M \mathbf{q} = g(\mathbf{q}) \, .
\label{Eqn:NonlinearModelProblem}
\end{equation}
Here, $\mathbf{q}(t)=\mathbf{p}(t)/\|\mathbf{p}(t)\| \in \mathbb{S}^2$, and the right-hand side of the equation represents the residual of $M\mathbf{q}$ after subtracting off the projection onto the direction $\mathbf{q}$. This guarantees that the velocity is perpendicular to $\mathbf{q}$, and so the evolution will stay on $\mathbb{S}^2$ when the initial condition $\mathbf{q}(0)\in\mathbb{S}^2$. Furthermore, for any eigenvector $\mathbf{v}\in\mathbb{S}^2$ of the matrix $M$, we have $g(\mathbf{v}) = M\mathbf{v} - \mathbf{v}\mathbf{v}^T M \mathbf{v} = \lambda\mathbf{v} - \lambda \mathbf{v}\mathbf{v}^T \mathbf{v} = \lambda\mathbf{v} - \lambda \mathbf{v} \cdot 1 = 0$. This implies that any eigenvector of $M$ will lead to an equilibrium of the dynamical system. Moreover, $-\mathbf{v}$ also gives an equilibrium of $g$ since $g(-\mathbf{v})$ is zero.

The Jacobian of the function $g$ is given by $Dg(\mathbf{q})=M-(\mathbf{q}^TM\mathbf{q}) I -2\mathbf{q}\mathbf{q}^TM$. To study the stability at the equilibrium $\mathbf{v}$, we need to look at the eigenvalue of the corresponding matrix $Dg(\mathbf{v})$. The simplest case assumes a constant diagonal matrix $ M $ with non-zero diagonal elements. This implies that the eigenvectors of $M$ are given by the direction vectors $\mathbf{e}_i$. Therefore, we have $Dg(\mathbf{e}_i) \mathbf{e}_j = M \mathbf{e}_j - (\mathbf{e}_i^T M \mathbf{e}_i) \mathbf{e}_j - 2\mathbf{e}_i\mathbf{e}_i^T M \mathbf{e}_j = (\lambda_j-\lambda_i) \mathbf{e}_j$ for $i\ne j$ and $Dg(\mathbf{e}_i) \mathbf{e}_i = -2\lambda_i \mathbf{e}_i$. Since we are considering motions on the surface, the motion direction $\mathbf{e}_i$ at the point $\mathbf{e}_i$ is actually not the most crucial factor in affecting the stability of the trajectory. Instead, we need to look at those other two directions. The stability of the motion is therefore governed by the quantities $\lambda_j-\lambda_i$. Defining $\sigma_{i,j}=\lambda_j-\lambda_i$ for $i\ne j$ and $\sigma=\min_{j,i\ne j} \sigma_{i,j}$, we have a stable equilibrium at $\mathbf{q}=\mathbf{e}_i$ if $\sigma_{i,j}<0$ for both $i\ne j$.

We now use the above property to mimic an A-stability condition in the numerical integrator. We assume $\sigma_{i,j}<0$ for both $i\ne j$ at the point $\mathbf{q}=\mathbf{v}_i$, we have the {following stability conditions} for the above nonlinear model problem (\ref{Eqn:NonlinearModelProblem}).
\begin{itemize}
\item We first examine methods based on forward Euler or RK2 schemes. This category encompasses the conventional forward Euler method, SFE, RK2, TVDRK2, PRK2, PTVDRK2, and STVDRK2. In this context, the stability condition remains consistent with numerous numerical integrators for standard ODEs, given by $-2 \leq \sigma h \leq 0$ {with $h$ representing the time stepsize in the numerical integrator.}
\item For methods related to RK3 (including RK3, PRK3, TVDRK3, PTVDRK3, and STVDRK3), we have the interval of absolute stability given by $\mu^* \le \sigma h \le 0$ where the lower bound $\mu^* \approx -2.513$ is given by the real root of the equation $\frac{1}{6} \mu^3 + \frac{1}{2} \mu^2 + \mu +2 =0$. 
\end{itemize}

In particular, we can consider
$$
M=\left(
\begin{array}{ccc}
\frac{1}{2} & 0 & 0 \\
0 & -\frac{1}{2} &0 \\
0 & 0 & -\frac{1}{2}
\end{array}
\right) \, ,
$$
At the equilibrium $\mathbf{q}=\mathbf{e}_1$ (and the opposite point $\mathbf{q}=-\mathbf{e}_1$), the two relevant eigenvalues of $Dg(\mathbf{e}_1)$ are both given by $-\frac{1}{2}-\frac{1}{2}=-1$, indicating that this point is an attractive equilibrium. Similar calculations reveal that the remaining two eigenvectors of matrix $M$, namely $\mathbf{e}_2$ and $\mathbf{e}_3$, correspond to unstable equilibria. Consequently, we deduce that $\sigma = -1$. When applying this analysis to diverse numerical integrators, the corresponding stability conditions emerge as follows.
\begin{itemize}
\item We first consider the forward Euler-based or RK2-based methods. This includes the standard forward Euler method, the SFE, RK2, TVDRK2, PRK2, PTVDRK2, and STVDRK2. We have the stability condition $h \le 2$. 
\item For methods related to RK3 (including RK3, PRK3, TVDRK3, PTVDRK3, and STVDRK3), we have the interval of absolute stability given by $h \le -\mu^* $.
\end{itemize}

\begin{figure}[!htb]
\centering
\includegraphics[width=0.75\textwidth]{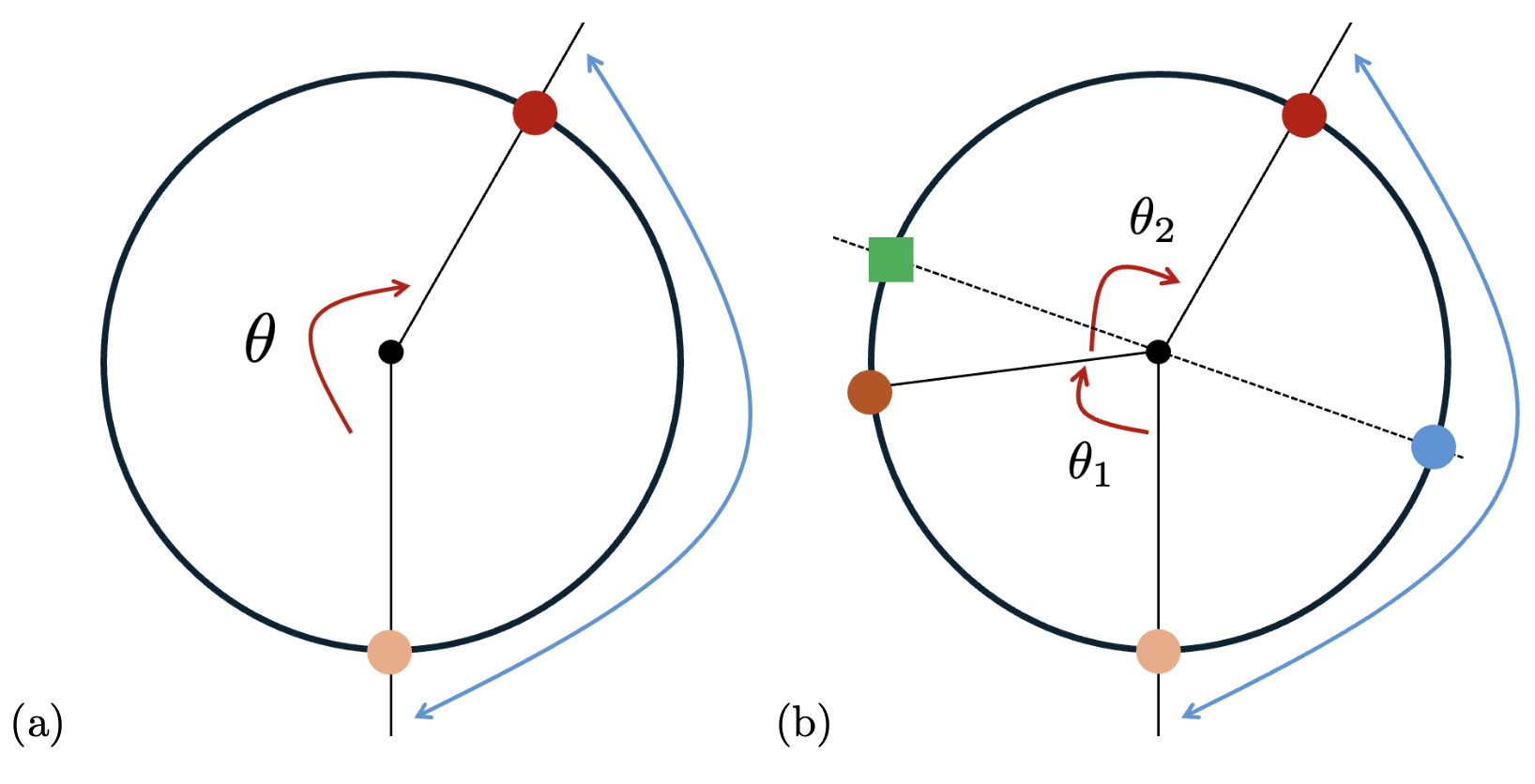}
\caption{{
(Section \ref{SubSec:Stability} {Stability Condition Due to SLERP}) We assume the motion is defined on the great circle of the sphere with the initial location on the south pole (i.e., the orange dot). (a) If the initial location travels more than $h\| f(\mathbf{p},t) \|>\pi$, the geodesic to the final location (the red dot) will be given by the segment indicated by the blue curve instead of the segment of travel. (b) For STVDRK2, we assume $\mathbf{p}^n$ (the orange dot) travels to $\mathbf{q}_1$ (the dark orange dot) and then $\mathbf{q}_2$ (the red dot). The midpoint along the geodesic from $\mathbf{p}^n$ to $\mathbf{q}_2$ is given by the blue dot instead of the expected location indicated by the green square.
}} 
\label{Fig:StabilitySLERP}
\end{figure}

{Except for a condition motivated by the A-stability as discussed above}, our class of STVDRK has another constraint on the step size due to the geometry. We begin by considering the SFE method. Let $\gamma$ be the curve that joins $\mathbf{p}^n$ and $\mathbf{p}^{n+1}$ based on the exponential map, where $\mathbf{p}^{n}=\gamma(0)$ and $\mathbf{p}^{n+1}=\gamma(hf(\p^n,t^n))$. We require that this curve $\gamma$ must be identical to the interpolant formed through SLERP. However, in scenarios where a point strays too far from its initial position, the SLERP interpolation could potentially yield the alternative segment along the great circle. To guarantee equivalence, satisfying the condition $h\|f(\p,t)\| < \pi$ becomes necessary, {as demonstrated in Figure \ref{Fig:StabilitySLERP}(a).}

Let us now consider the STVDRK2 method, {as shown in Figure \ref{Fig:StabilitySLERP}(b).} An initial observation reveals that for any selection of $h$ and $f$, the scheme generates three points $\mathbf{q}_1$ {(the dark orange dot)}, $\mathbf{q}_2$ {(the red dot)}, and $\mathbf{p}^n$ {(the orange dot)} that reside on the same hemisphere of the unit sphere. To see this, we choose a unit vector $\mathbf{n}$ that is perpendicular to the plane defined by the three points. Specifically, we set $\mathbf{n}=(\mathbf{q}_1-\mathbf{q}_2)\times (\mathbf{q}_1-\mathbf{p}^n)$. We then consider the plane that passes through the origin with the normal vector given by $\mathbf{n}$. We define the function $\phi(\mathbf{p})=\mathbf{n}\cdot \mathbf{p}$ for $\mathbf{p}\in\mathbb{R}^3$, and the equation of the plane is given by ${\phi^{-1}(0)}$. It can be shown that $\phi(\mathbf{q}_1)=\phi(\mathbf{q}_2)=\phi(\mathbf{p}^n)=(\mathbf{q}_2 \times \mathbf{p}^n)\cdot \mathbf{q}_1$ are all equal. Therefore, if the three points are not collinear (i.e., not on the same great circle), then they all lie on the same hemisphere separated by the plane $\phi(\mathbf{p})=0$. Now, we address the restriction on the step size $h$. When all intermediate stages satisfy the condition $h\|f(\p,t)\| < \pi/2$, two SLERP interpolations are established (one linking $\mathbf{p}^n$ and $\mathbf{q}_1$, and one joining $\mathbf{q}_1$ and $\mathbf{q}_2$) which both remain within the same hemisphere. Consequently, the midpoint between $\p^n$ and $\mathbf{q}_2$ similarly stays within this hemisphere. As a result, the STVDRK2 consists of consistent interpolations.

Extending this rationale to the STVDRK3 scheme, we require that $h\|f(\p,t)\| < \pi/2$ to ensure that the SLERP interpolant, responsible for generating $\q_3$, remains within the same hemisphere encompassing $\p^n$, $\q_1$, and $\q_2$. This condition alone does not seem enough to prevent non-uniqueness in the SLERP interpolation within the overall scheme. We might also need to additionally ensure that $\p^n$, $\q_3$, and $\q_4$ lie on a hemisphere (note that this hemisphere could potentially differ from the one encompassing $\p^n$, $\q_1$, and $\q_2$). However, since the distance between $\p^n$ and $\q_3$ cannot exceed $\pi/4$, the next propagation step cannot propel $\q_4$ further away from $\p^n$ by a distance greater than $\pi$. As a result, the scheme avoids the possibility of mis-interpolation through SLERP. Thus, the condition $h\|f(\p,t)\| < \pi/2$ itself is enough to ensure stability, given the constraints introduced by SLERP.

\subsection{{Comparisons with Some Lie-group Integrators}}

{In this section, we compare our proposed numerical integrators with some existing approaches. Since our approach is expressed in the form of exponential maps, it shares some similarities with Lie-group methods. We introduced one Lie-group approach in Section \ref{Sec:Introduction}, which is developed based on the inverse of the derivative of the exponential map, as shown in equation (\ref{Eqn:RKonManifolds}). Restricted to the ordinary differential equation (ODE) given by $Y’ = A(Y)Y$, where $Y \in G$ is in a matrix Lie group denoted by $G$, and $A(Y) \in \mathfrak{g} = T_I G$ is the corresponding Lie algebra, we obtain an approach that determines a matrix function $\Omega(t)$ such that $Y(t) = \exp(\Omega(t)) Y_0$. One can show that $\Omega(t)$ satisfies the differential equation given by $\Omega’(t) = d\exp_{\Omega}^{-1}(A(t))$, where the inverse of the derivative of the matrix exponential is given by $d\exp_{\Omega}^{-1}(H) = \sum_{k \ge 0} \frac{B_k}{k!} \text{ad}{\Omega}^k (H)$. Here, $B_k$ denotes the Bernoulli numbers, and $\text{ad}{\Omega}(A) = \Omega A - A \Omega$ is the adjoint operator. This leads to the so-called Magnus approach \cite{mag54}. Although these methods can achieve high-order accuracy, they heavily rely on certain approximations of the inverse mapping. In contrast, the class of STVDRK methods is purely explicit and does not involve the computation of any inverse mapping.}

{Another Lie-group method is the explicit $s$-stage Crouch-Grossman approach \cite{crogro93}, given by:
$$
Y^{(i)} = \exp(ha_{i,i-1}K_{i-1}) \cdot \dots \cdot \exp(h a_{i,1} K_1) Y_n \, , \,
Y_{n+1}=\exp(hb_sK_s) \cdot \dots \cdot \exp(h b_1 K_1) Y_n,
$$
where $K_i = A(Y^{(i)})$, and $b_i$ and $a_{i,j}$ are some real numbers for $i,j = 1,\cdots,s$. Since this class of methods involves only the composition of exponential maps, the numerical solution remains on the manifold by definition. These methods have a flavor similar to typical RK methods by successively correcting the manifold flow direction to achieve high-order accuracy. In contrast, our approach corrects the solution’s location by considering the geodesic on the manifold using SLERP interpolation. This class of methods requires many evaluations of the exponential map. Specifically, the $s$-stage method mentioned above might require as many as $\frac{1}{2}s(s+1)$ evaluations of the exponential map. In contrast, our proposed STVDRK methods apply to a large class of nonlinear ODE problems and are more economical (at least up to STVDRK3), as STVDRK2 and STVDRK3 consist of only two and three calls to the exponential map, respectively.}

{Taking the simplest case of the Magnus method or the Crouch-Grossman method, one obtains the so-called Lie-Euler method $Y_{n+1} = \exp(hA(Y_n))Y_n$. This method has a similar flavor to our SFE formula, except that our expression applies to general nonlinear dynamical systems on spherical geometry. Additionally, the approaches to high-order extensions of these methods are different. In particular, none of these Lie-group methods involve interpolation of intermediate solutions on the underlying manifold.}

\section{Numerical Examples}
\label{Sec:Examples}

This section compares our proposed STVDRK methods with several other numerical integrators. We aim to demonstrate the accuracy and stability of the proposed algorithms through numerical demonstrations. Furthermore, we employ these methods in diverse applications, including 
simulations of high-frequency wave propagations on a unit sphere utilizing ray tracing methods and exploring $p$-harmonic flows fundamentally governed by a partial differential equation (PDE). 

\subsection{Convergence}
\label{SubSec:ExConvergence}

This example considers the following four-point vertices flow given by
$$
f(\x)=\sum_{i=1}^4 \frac{\x_i \times \x}{2(1-\x_i\cdot \x)}
$$
with $\x_1=\frac{1}{\sqrt{3}}(1,-1,1)$, $\x_2=\frac{1}{\sqrt{3}}(1,-1,-1)$, $\x_3=\frac{1}{\sqrt{5}}(-2,1,0)$ and also $\x_4=\frac{1}{\sqrt{2}}(-1,-1,0)$. The initial condition is given by $\p_0=(1,0,0)$. {Except those explicit schemes mentioned in Section \ref{SubSec:ProjectedTVDRK}, we will also compare with the solutions obtained by various popular numerical integrators. The first is the fixed step third-order Radau-IIA method \cite{haiwan99,hailubwan06}, which is a fully implicit RK method. Although the Jacobian of the dynamical systems can be determined analytically, we approximate it using finite differences. We have also considered the high-order fixed-step DOP54 scheme \cite{haiwannor93} developed based on the explicit RK method of order (4)5 \cite{dorpri80}. Finally, we note that the differential equation can also be formulated as a differential-algebraic system of equations (DAE) of index-1 by replacing one differential equation (for example, the differential equation related to the $z$-component) using the derivative of the constraint given by $xx'+yy'+zz'=0$. We will compare our solutions with the third- and fifth-order solutions from the \textsf{MATLAB} function \textsf{ode15s} by specifying the parameter \textsf{MaxOrder} using 3 and 5, respectively. To ensure a fair comparison, we set both \textsf{RelTol} and \textsf{AbsTol} to be the third and fifth power of the timestep, respectively, and switch off the adaptivity in the numerical integrator by enforcing fixed timesteps in the algorithms. We label these integrators DAE3 and DAE5. Based on the DEA formulation, we have also compared our solutions with the \textsf{Octave} implementation of the DASSL method \cite{brecampet95}. Unfortunately, we have less control of this adaptive numerical integrator. We can only specify the absolute tolerance and the maximum order, but not a fixed timestep. In particular, we vary the bound in the absolute tolerance using the third and fourth power of a parameter corresponding to the actual fixed timestep in other numerical schemes and assign the maximum order to be three and four, respectively. We label these two integrators DASSL3 and DASSL4.}

\begin{table}[!htb]
\centering
\begin{tabular}{|c||c|c||c|c|c|}
\hline
                 & (a)  $E_2$           & (b) $E_{\mbox{norm}}$ & (c) Proj & (d) SLERP & (e) exp\\ \hline\hline
RK3              & 3                           & 3      & --   & --  & -- \\
RK4              & 4                           & 4    & --   & --  & --\\ 
TVDRK2           & 2                           & \underline{\textbf{3}}          & --            & --    &   --  \\
TVDRK3           & 3                           & 3                       & --   & --   & -- \\
\hline
PRK3   & 3                           & Exact            & 1          & --    &  --  \\
PRK4    & 4                           & Exact             &1        & --    &  --   \\ 
PTVDRK2 & 2                           & Exact               & 1        & --   &  --  \\
PTVDRK2' & 2                           & Exact               & 3        & --   &  --  \\
PTVDRK3 & 3 				   & Exact              & 1         & --  &   --  \\
PTVDRK3' & \underline{\textbf{2}} & Exact              &5         & --   &   -- \\
\hline
SFE         & 1                           & Exact                  &  --     &  --  &  1   \\
STVDRK2     & 2                           & Exact             & --       &  1     & 2   \\
STVDRK3     & 3                           & Exact              & --      &    2  &  3 \\ 
\hline
STVDRK4     & *                         & Exact              & --      &   6  & 10  \\ 
SSSPRK(5,4)     & *                        & Exact              & --      &   6  &  6 \\ 
SSSPRK(10,4)     & *                       & Exact              & --      &   3  &  10 \\
\hline
\end{tabular}
\caption{(Section \ref{SubSec:ExConvergence}) The order of convergence for various numerical integrators is demonstrated when the error is measured using (a) $E_2$ and (b) $E_{\mbox{norm}}$. Since the constraint $\| \mathbf{p}^n \| = 1$ is satisfied for all projected methods and integrators developed based on SLERP, we do not report the convergence rate for these methods. Five specific values of interest are highlighted: the accuracy of the PTVDRK3', STVDRK4, SSSPRK(5,4), and SSSPRK(10,4) schemes and the convergence rate in the norm of the TVDRK2 scheme. These numbers are either bolded or replaced by *. The number of (c) projections, (d) SLERP calls, and (e) exponential map evaluations are presented.
}
\label{Table:Convergence}
\end{table}

\begin{figure}[!htb]
\centering
(a)\includegraphics[trim=0 0 10 0, clip, width=0.45\textwidth]{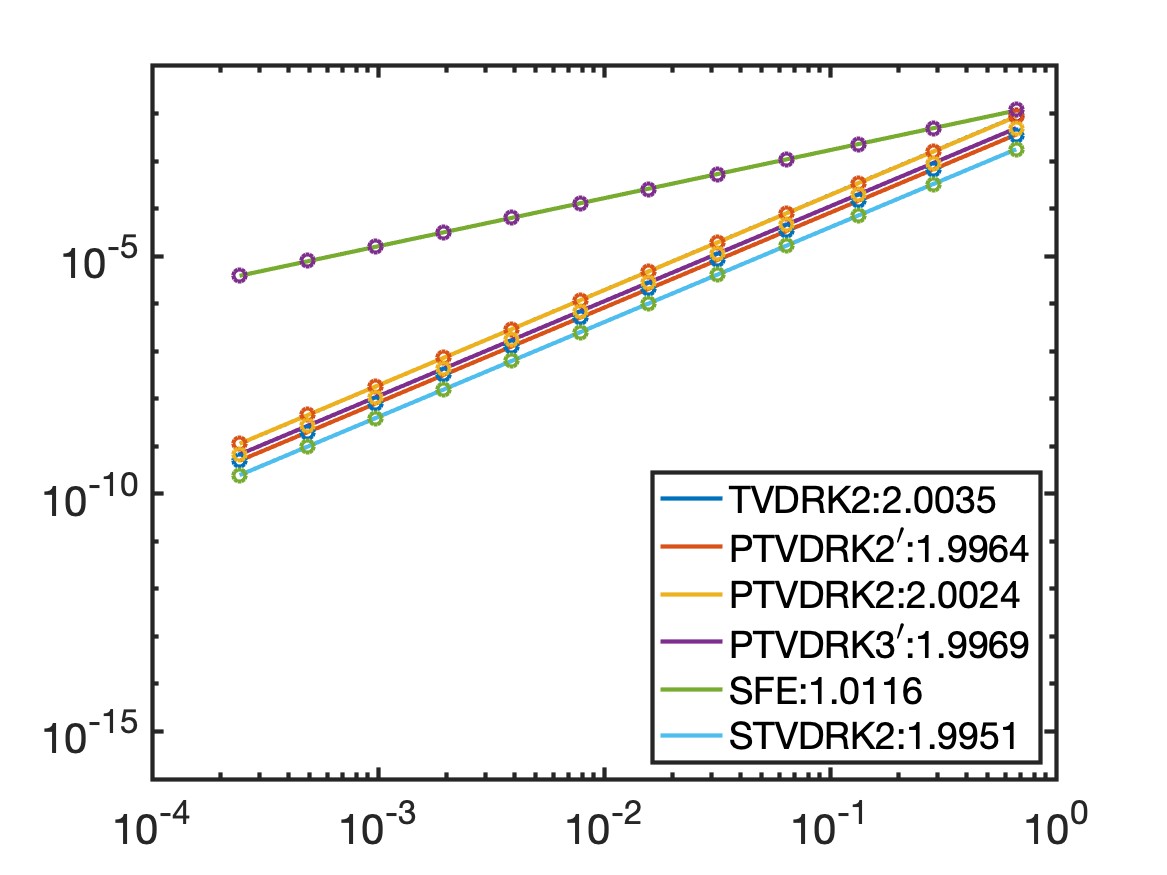}
(b)\includegraphics[trim=0 0 10 0, clip, width=0.45\textwidth]{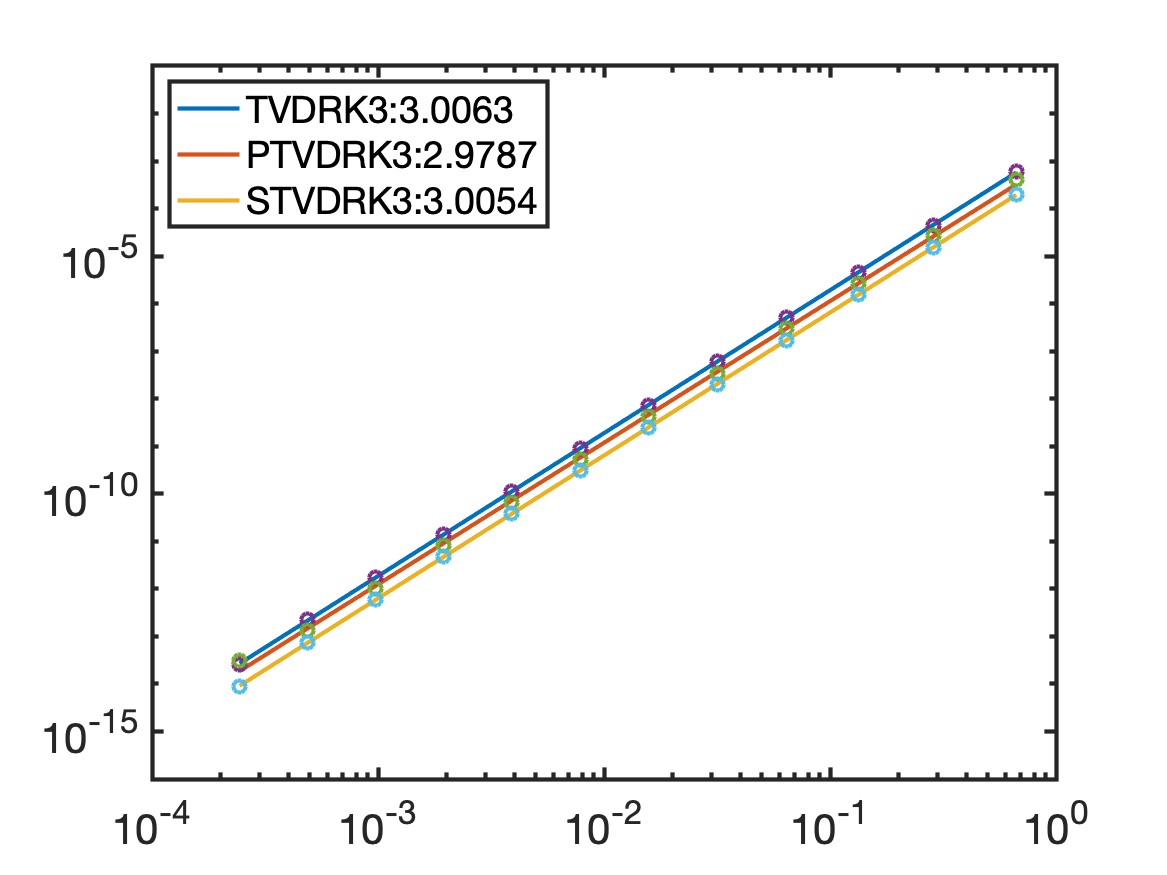} 
(c)\includegraphics[trim=0 0 10 0, clip, width=0.45\textwidth]{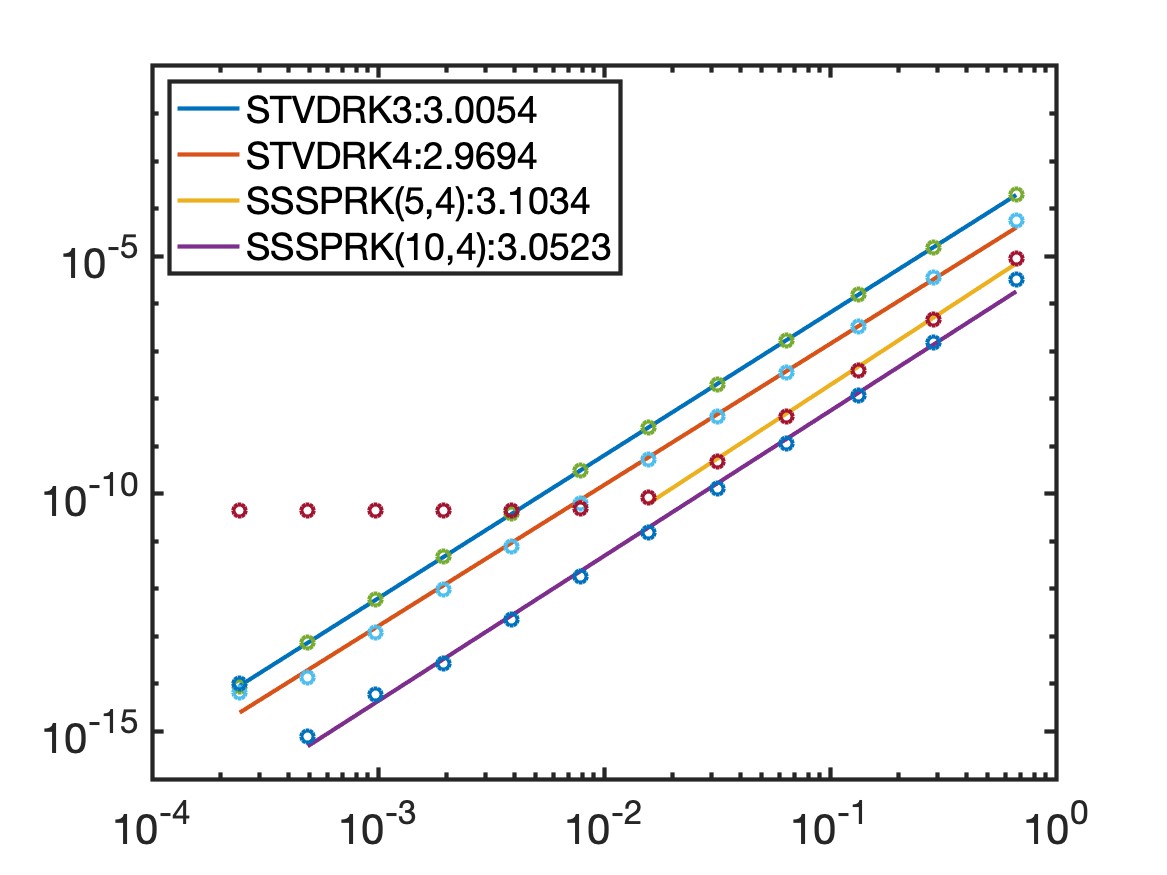} 
(d)\includegraphics[trim=0 0 10 0, clip, width=0.45\textwidth]{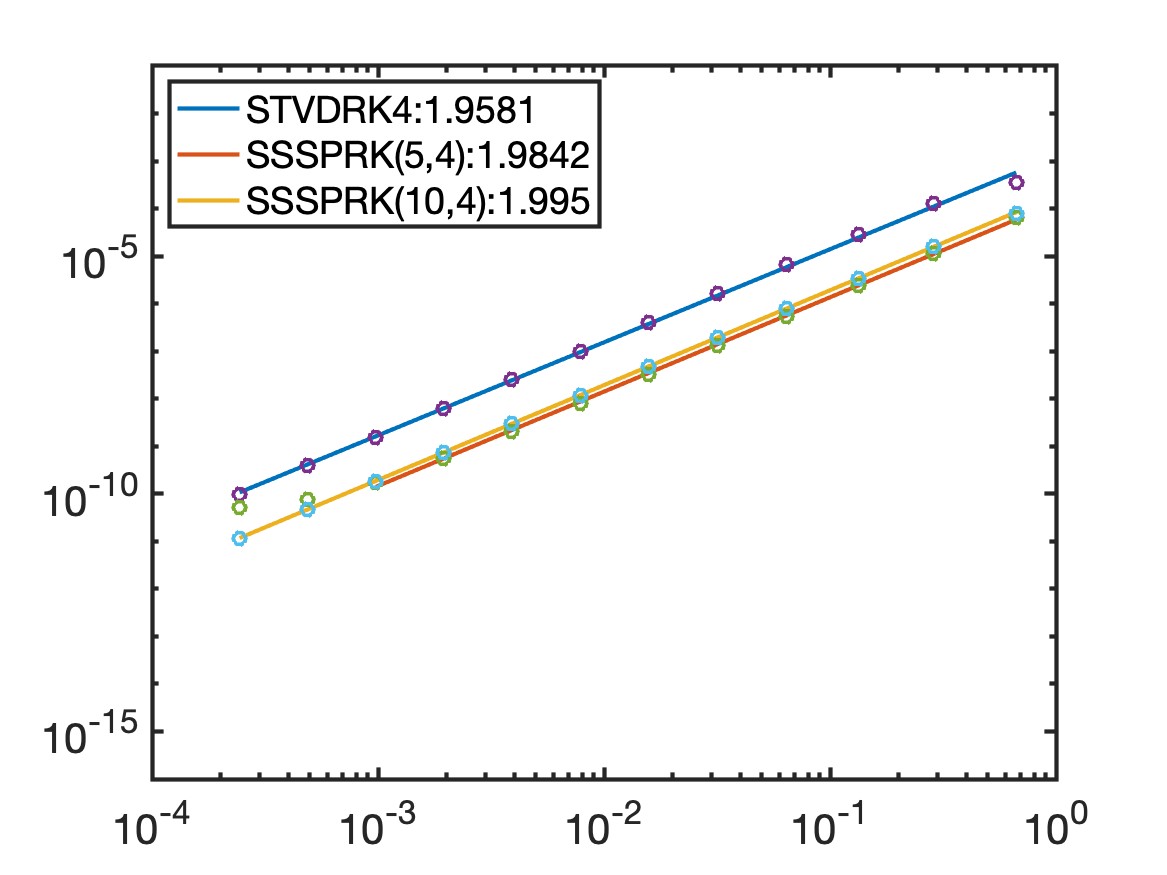}
{(e)}\includegraphics[trim=0 0 10 0, clip, width=0.45\textwidth]{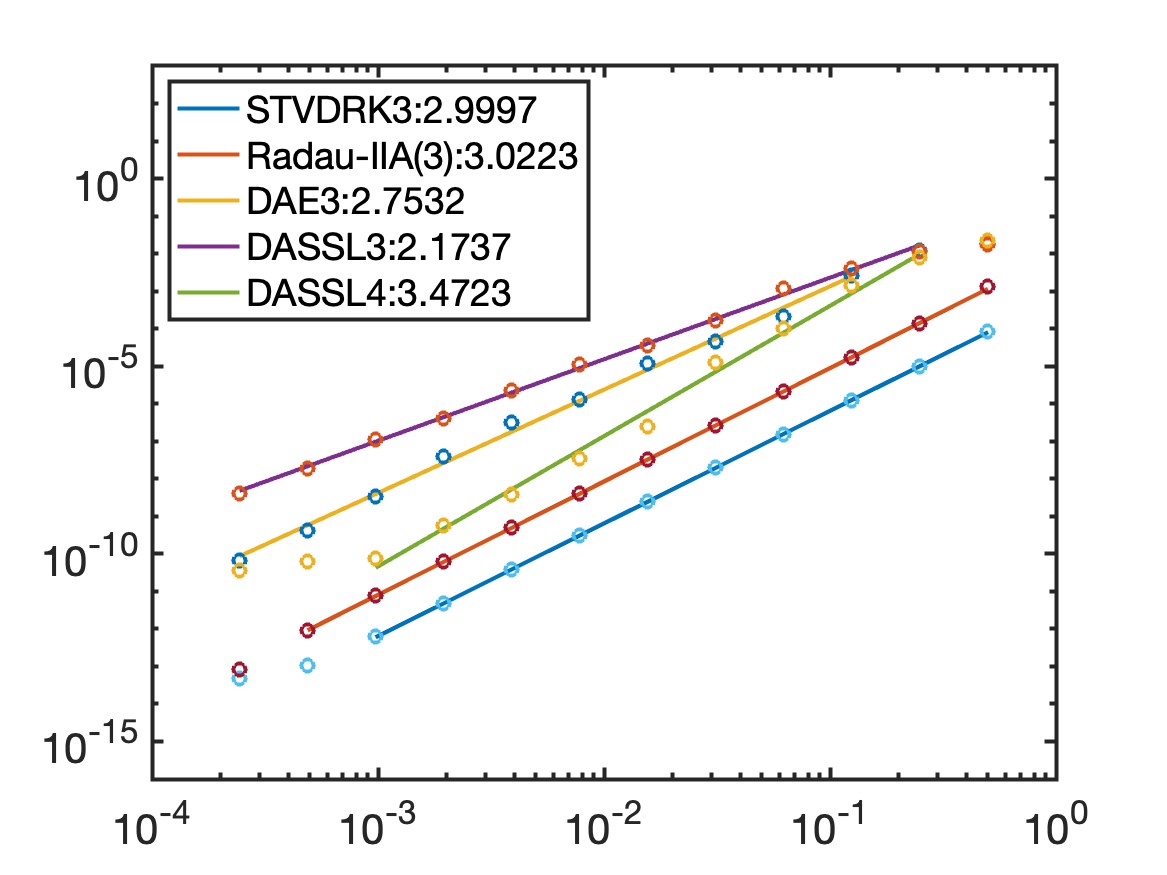} 
{(f)}\includegraphics[trim=0 0 10 0, clip, width=0.45\textwidth]{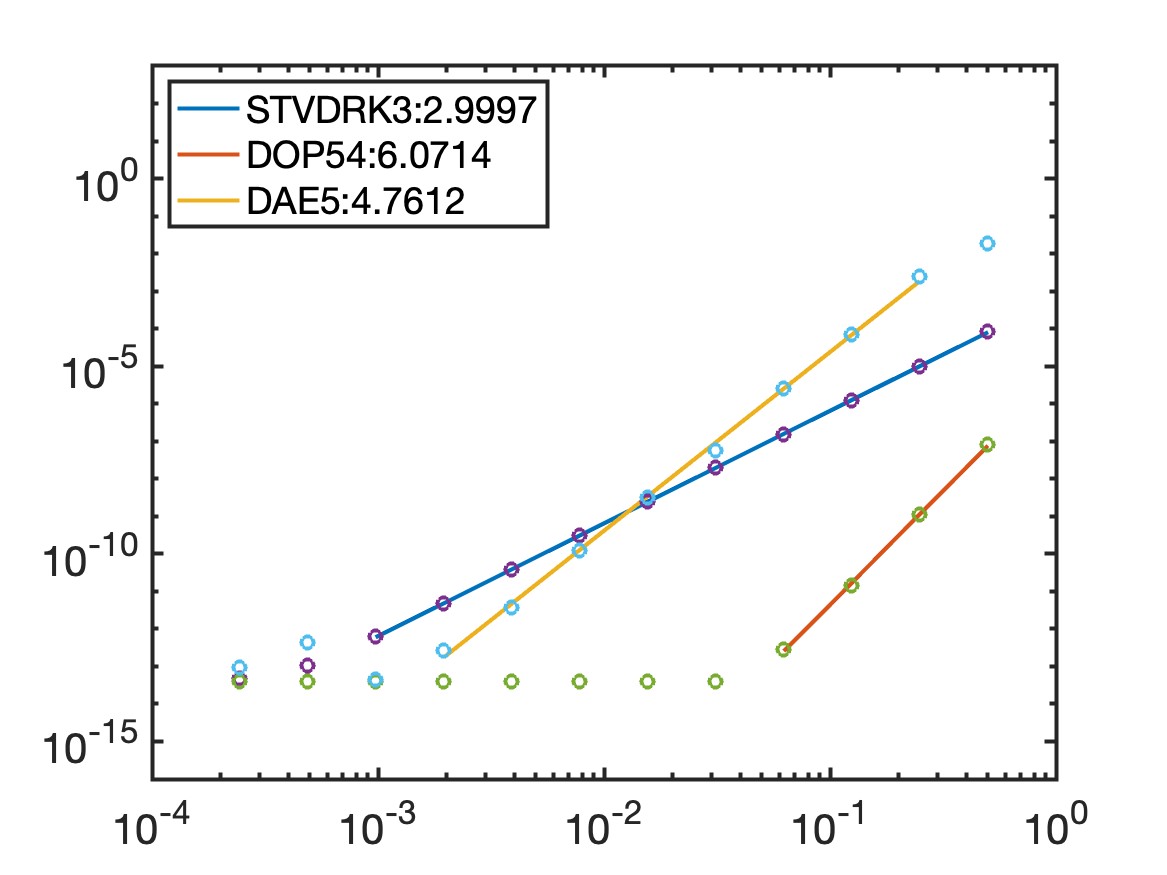} 
\caption{(Section \ref{SubSec:ExConvergence}) (a) The $E_2$ errors in the solution obtained by SFE demonstrate first-order accuracy, while those obtained by TVDRK2, PTVDRK2, PTVDRK2', PTVDRK3', and our proposed STVDRK2 demonstrate second-order accuracy. (b) The $E_2$ errors in the solutions obtained by TVDRK3, PTVDRK3, and our proposed STVDRK3 exhibit third-order convergence. (c) The $E_2$ errors in the solutions obtained by STVDRK4, SSSPRK(5,4), and SSSPRK(10,4) when the convex combination is done using progressive SLERP. We compare these errors with the one computed using STVDRK3. All schemes exhibit only third-order convergence. (d) The $E_2$ errors in the solutions obtained by STVDRK4, SSSPRK(5,4) and SSSPRK(10,4) when the convex combination is done using the Fr\'echet mean. All schemes exhibit only second-order convergence. {(e) The $E_2$ errors in the solutions obtained by Radau-IIA(3), DAE3 and DASSL methods. We have also shown the error in the STVDRK3 solutions as a reference. (f) The $E_2$ errors in the solutions obtained by high-order methods DOP54 and DAE5. We have also shown the error in the STVDRK3 solutions as a reference.}
} 
\label{Fig:Convergence123}
\end{figure}

\begin{figure}[!htb]
\centering
(a)\includegraphics[trim=0 0 10 0, clip, width=0.45\textwidth]{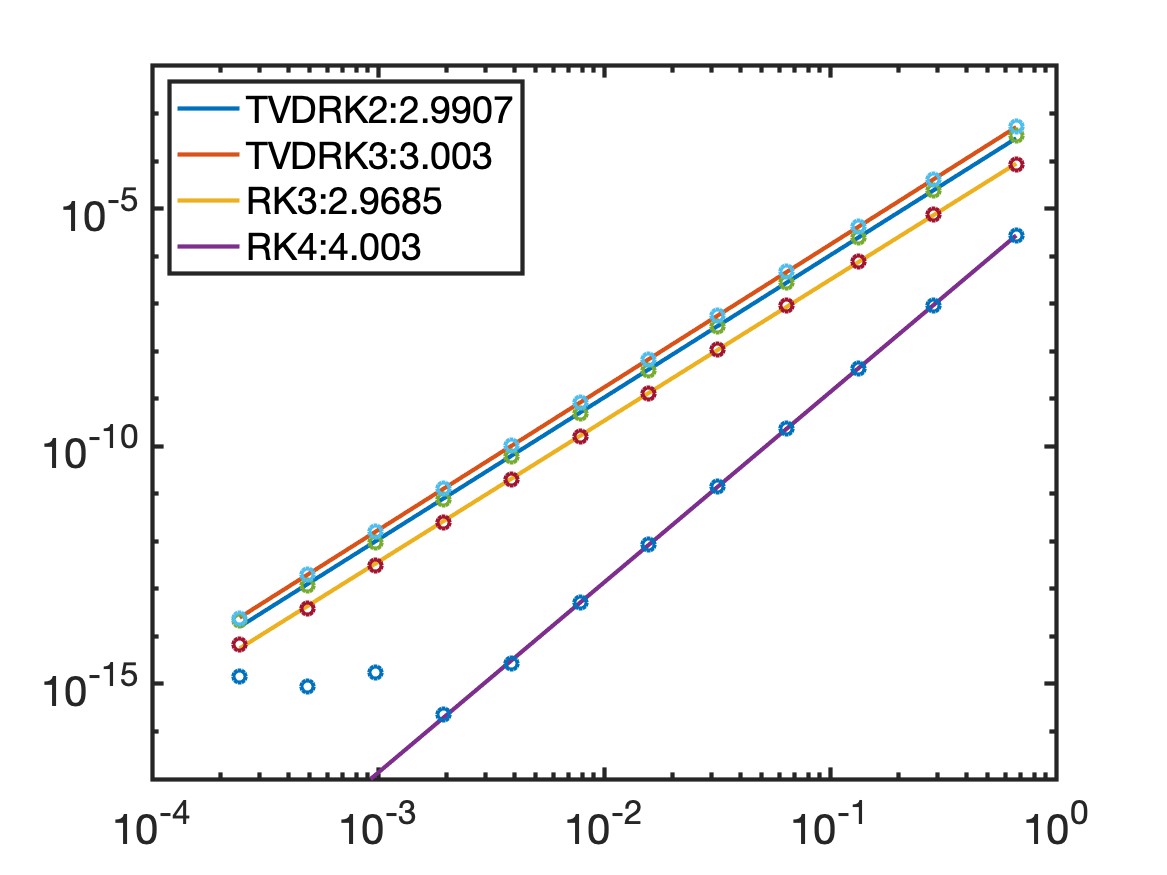}
{(b)}\includegraphics[trim=0 0 10 0, clip, width=0.45\textwidth]{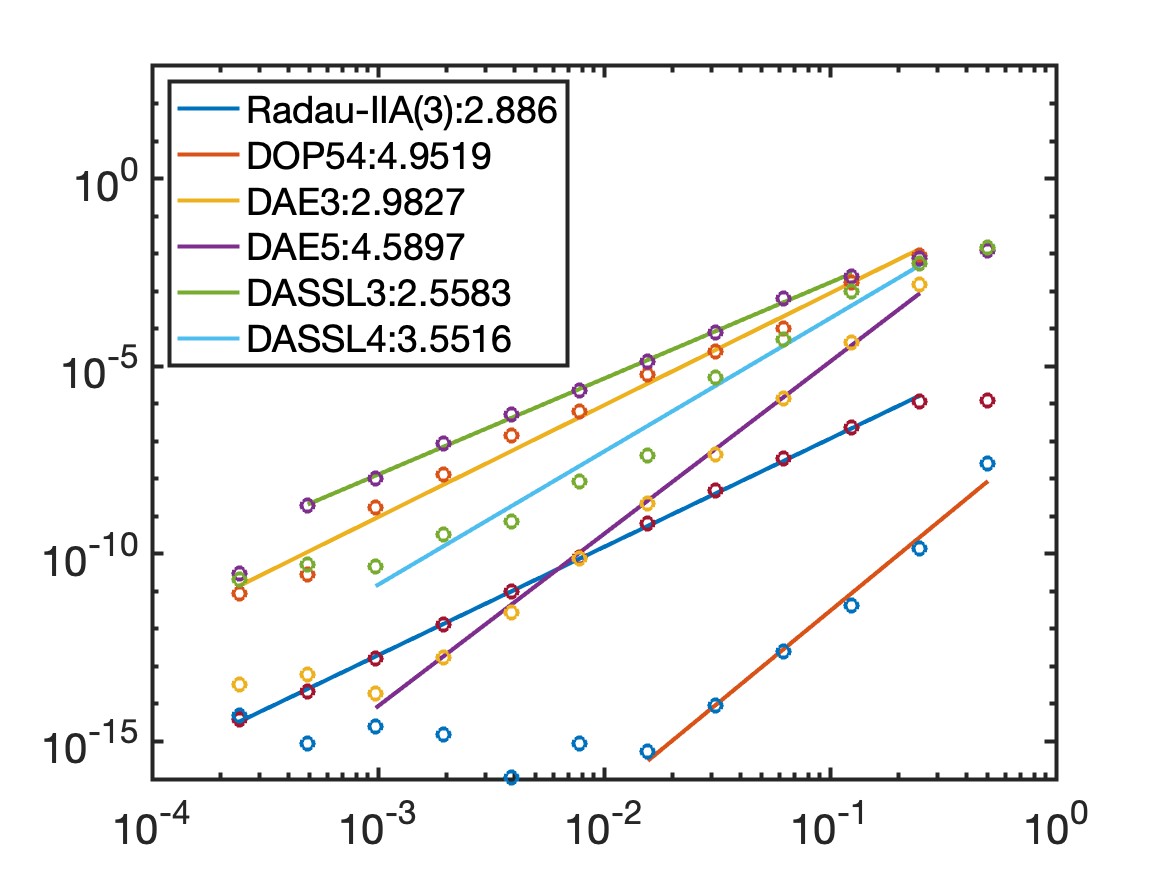}
\caption{(Section \ref{SubSec:ExConvergence}) (a) $E_{\text{norm}}$ errors in the solutions obtained by some explicit RK methods including TVDRK2, TVDRK3, RK3 and RK4. They demonstrate third or fourth-order convergence. {(b) $E_{\text{norm}}$ errors in the solutions obtained by some popular methods including Radau-IIA(3), DOP54 and DAE methods. They demonstrate third or fifth-order convergence.}
}
\label{Fig:ConvergenceNormError}
\end{figure}

To check the accuracy of the numerical solutions, we consider the following two error measures,
$$
E_2 = \| \p^N-\p_{\mbox{exact}} \|_2 \, \mbox{ and } \, E_{\mbox{norm}} = \left| \|\p^N\|-1 \right| 
$$
where $\mathbf{p}^N$ represents the solution at the final time $T=2$. The first error quantifies the $L_2$ discrepancy between the numerical solution and the exact solution of the ODE in $\mathbb{R}^3$. In cases where a convergent numerical scheme yields a solution not confined to $\mathbb{S}^2$ but rather situated in $\mathbb{R}^3$ generally, i.e., $\| \mathbf{p}^N \|$ may not maintain unit length for any finite $h$ value, this definition of error might not be the most crucial since solutions violating such a constraint may already lack physical interpretation across numerous applications. In light of this, we additionally quantify the error in the norm of the solution at the final time, which we term as $E_{\text{norm}}$.

The rate of convergence corresponding to these two errors for all the mentioned numerical integrators is presented in Table \ref{Table:Convergence}. An intriguing observation emerges from this analysis: the $E_{\text{norm}}$ error in the solution derived from the TVDRK2 scheme exhibits third-order convergence. Despite the fact that these standard numerical integrators do not explicitly account for the constraint, TVDRK2 appears to maintain the norm of the numerical solution better. Across most numerical schemes, the convergence in $E_2$ error aligns with the anticipated behavior. However, there is an exception with the PTVDRK3' scheme, where the observed convergence is one order lower than expected. Instead of the expected third-order convergence, we observe a second-order convergence. This example indicates that perturbations introduced by multiple projections can potentially introduce significant fluctuations in the overall accuracy of the numerical scheme. We refer to Appendix B for proof that this internal projection scheme PTVDRK3' is of second order.

Figures \ref{Fig:Convergence123}-\ref{Fig:ConvergenceNormError} illustrate the computed errors in the solutions obtained from all mentioned numerical integrators. In Figure \ref{Fig:Convergence123}(a), we have compiled the numerical integrators displaying first- or second-order accuracy. This group encompasses TVDRK2, PTVDRK2, PTVDRK2', PTVDRK3', SFE, and our proposed STVDRK2. The uppermost curve corresponds to SFE, which is an extension of the standard forward-Euler method tailored for spherical data. Consequently, observing first-order convergence in its error behavior is unsurprising. Five second-order TVDRK integrators are depicted in this figure. The standard TVDRK2 serves as the reference, with two PTVDRK2 variants stemming from it. As anticipated, these three methods exhibit consistent second-order convergence. Worth noting is that the PTVDRK3' algorithm is also second-order accurate. The lower curve corresponds to the least-squares fitting line for errors arising from our proposed STVDRK2 method. This method requires no velocity extension from the sphere or any intermediate projection steps, resulting in a solution of greater accuracy than other second-order methods. In Figure \ref{Fig:Convergence123}(b), the least-squares fitting lines for errors within three TVDRK-based integrators and Radau-IIA are displayed. With fewer excessive projection steps during intermediate stages, the PTVDRK3 method reinstates its third-order convergence property. Upon comparison of these three methods, it becomes evident that our proposed STVDRK3 method yields the most accurate solution. Figure \ref{Fig:Convergence123}(c-d) shows the numerical errors in the solution obtained from STVDRK4, SSSPRK(5,4), and SSSPRK(10,4) when the convex combination is performed using progressive SLERP and Fr\'echet mean, respectively. It can be observed that the numerical solutions do not exhibit the expected accuracy. Specifically, the accuracy decreases to third-order only when the convex combination is computed using progressive SLERP. Furthermore, when the Fr\'echet mean is implemented, the accuracy drops even further to second-order. {Figure \ref{Fig:Convergence123}(e-f) shows the error in the solutions obtained by some commonly used approaches with our proposed STVDRK3 as a reference. We consider the third-order Radau-IIA scheme, the DOP54 scheme, the third- and fifth-order DAE solvers, and the third- and fourth-order DASSL integrators. Although both the Radau-IIA(3) and the third-order DAE3 solvers provide third-order accurate solutions, the errors from these methods are more significant than those obtained by our proposed approach. While DAE5 offers roughly fifth-order convergence solutions, its error is smaller than that of STVDRK3 only when the time step is smaller than $O(10^{-2})$. DOP54 is a high-order numerical method that provides the most accurate solutions in this example.}

Figure \ref{Fig:ConvergenceNormError}{(a)} illustrates the convergence of the solution's norm using standard RK methods in the embedded space $\mathbb{R}^3$. Since no projection step is involved, these numerical methods are not sphere-invariant, implying that the normal in the numerical solution will generally not be unity. As mentioned earlier, TVDRK3 and RK3 exhibit third-order convergence in the norm of the solution, while the norm error in RK4 converges at fourth-order. The surprising result is that TVDRK2 also demonstrates a third-order convergence in the solution's norm. This result matches with the proof given in Appendix A. {Figure \ref{Fig:ConvergenceNormError}(b) shows the corresponding errors from some commonly used numerical integrators. We see that Radau-IIA(3), DOP54, DAE3, DAE5, DASSL3 and DASSL4 do not give exact constrained solutions. We observe that solutions from Radau-IIA(3) and DOP54 converge to the sphere with orders of 3 and 5, respectively. It is reasonable for solutions from these methods to deviate from the unit sphere since there is no mechanism to ensure these solutions satisfy the constraint. The geometric integrators DAE, however, do not strictly enforce the constraint but allow solution trajectories to leave the surface. This is because the constraint is a part of the DAE system. These implicit schemes involve solving a system of nonlinear equations using an iterative method, which accepts a numerical tolerance in the convergence criteria. We observe only a third- and fifth-order convergence in DAE3 and DAE5, respectively. The corresponding convergence in DASSL3 is between two and three, and that in DASSL4 is between three and four.}


\begin{figure}[!htb]
\centering
(a-i)\includegraphics[trim=20 0 70 0, clip, width=0.4\textwidth]{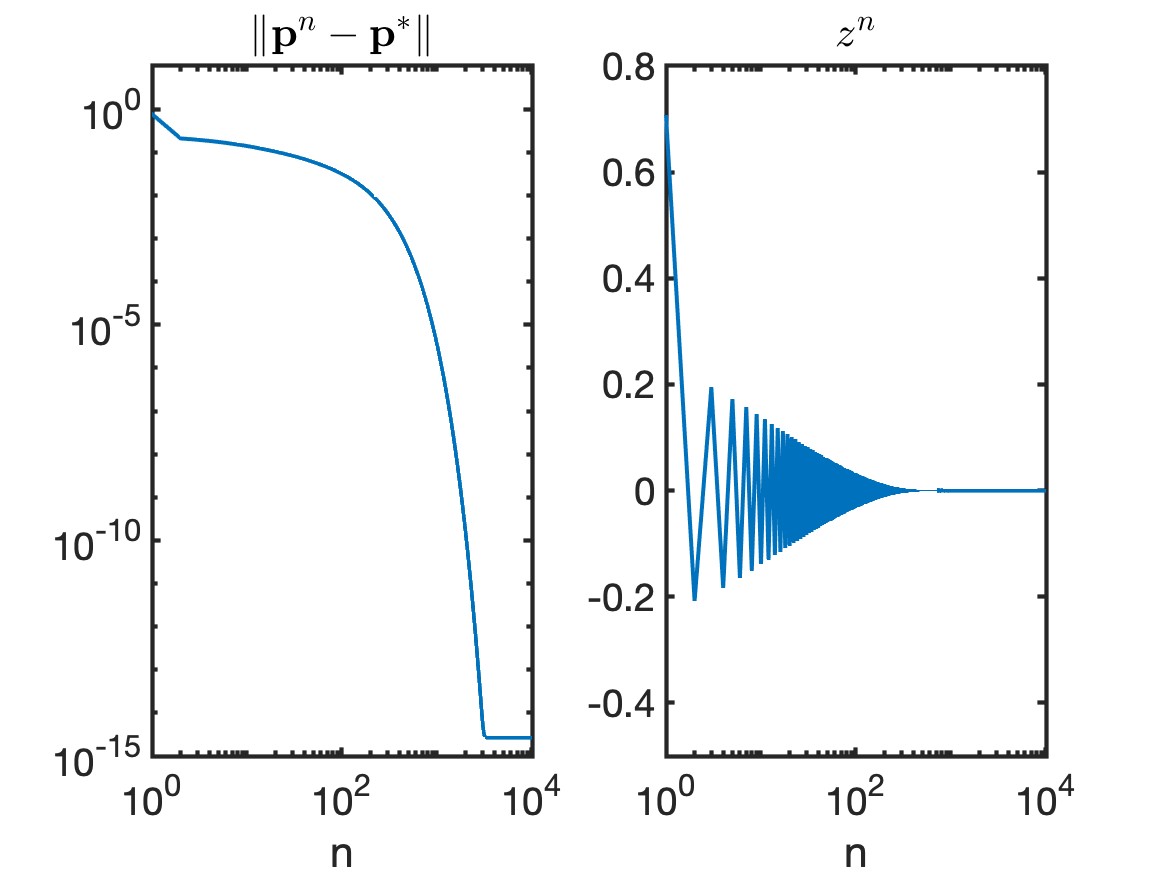}
(a-ii)\includegraphics[trim=20 0 70 0, clip, width=0.4\textwidth]{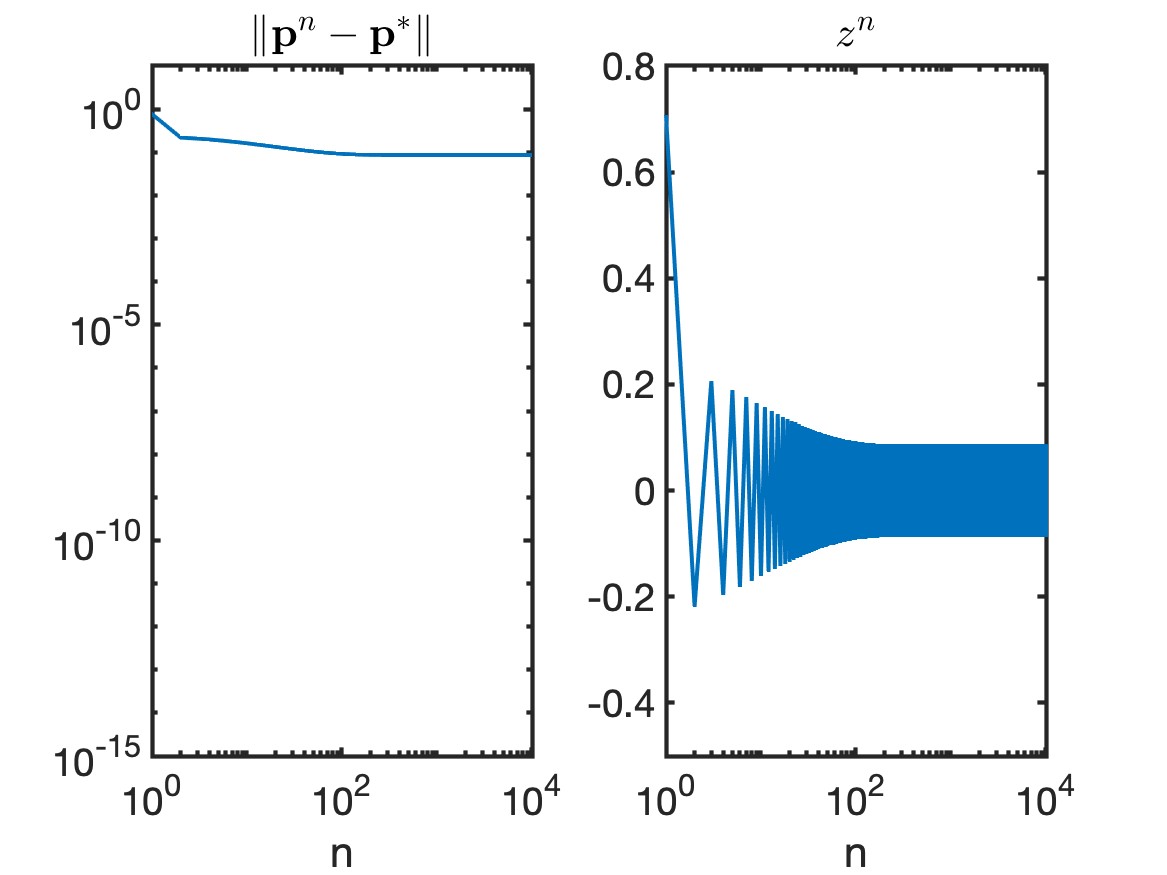} \\
(b-i)\includegraphics[trim=20 0 70 0, clip, width=0.4\textwidth]{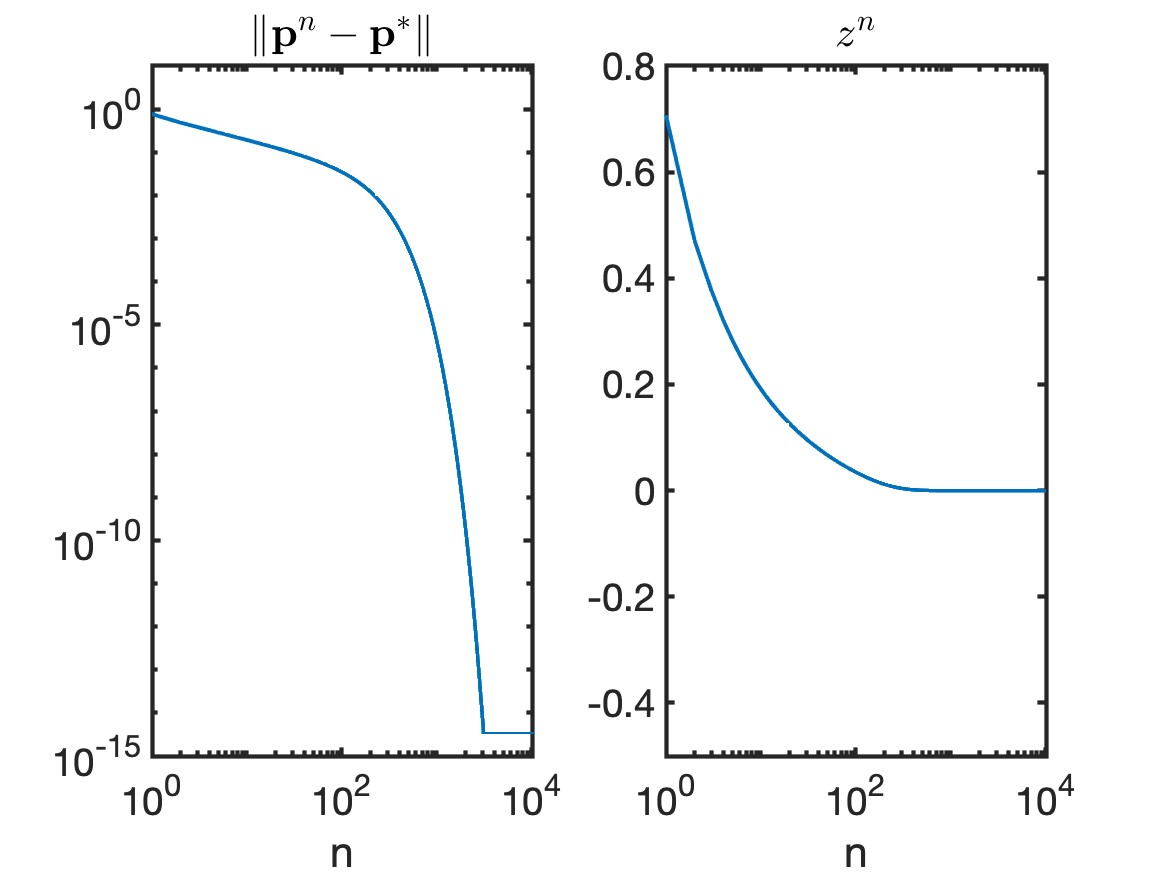}
(b-ii)\includegraphics[trim=20 0 70 0, clip, width=0.4\textwidth]{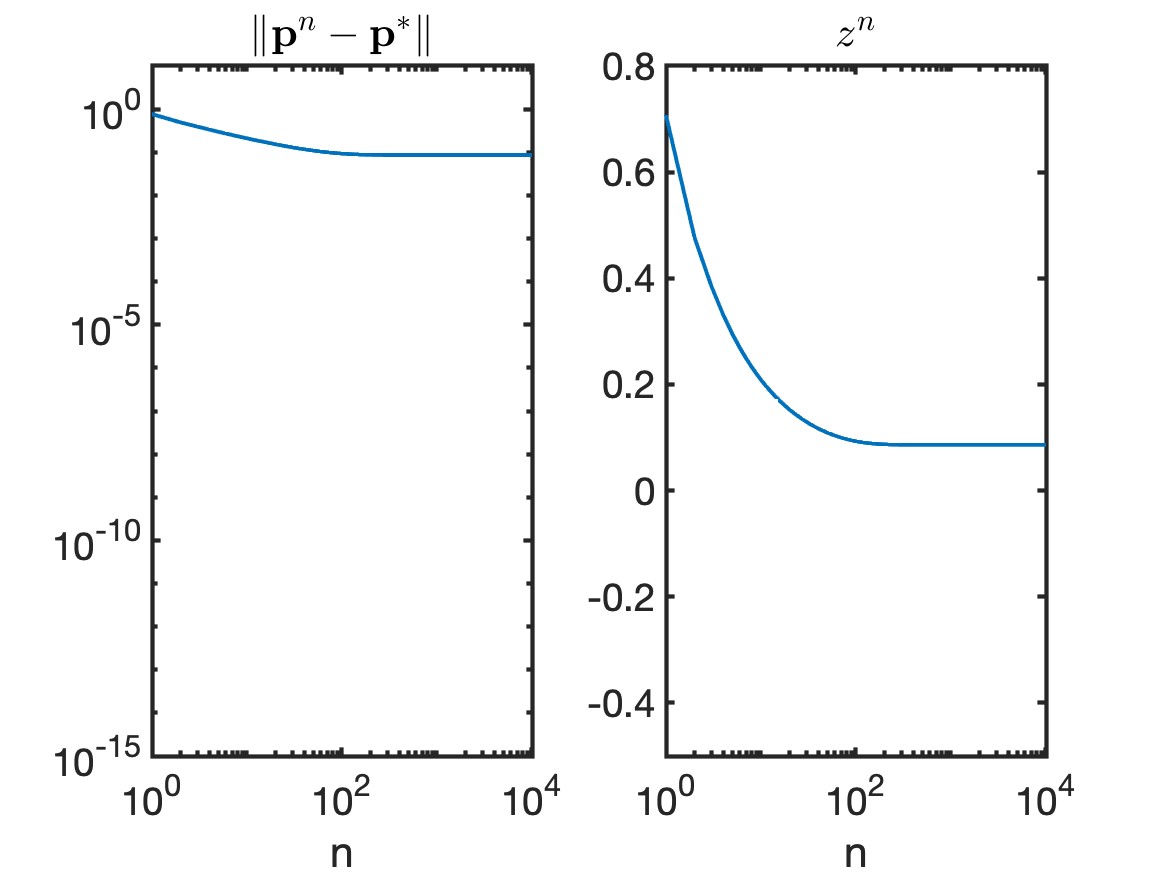} \\
(c-i)\includegraphics[trim=20 0 70 0, clip, width=0.4\textwidth]{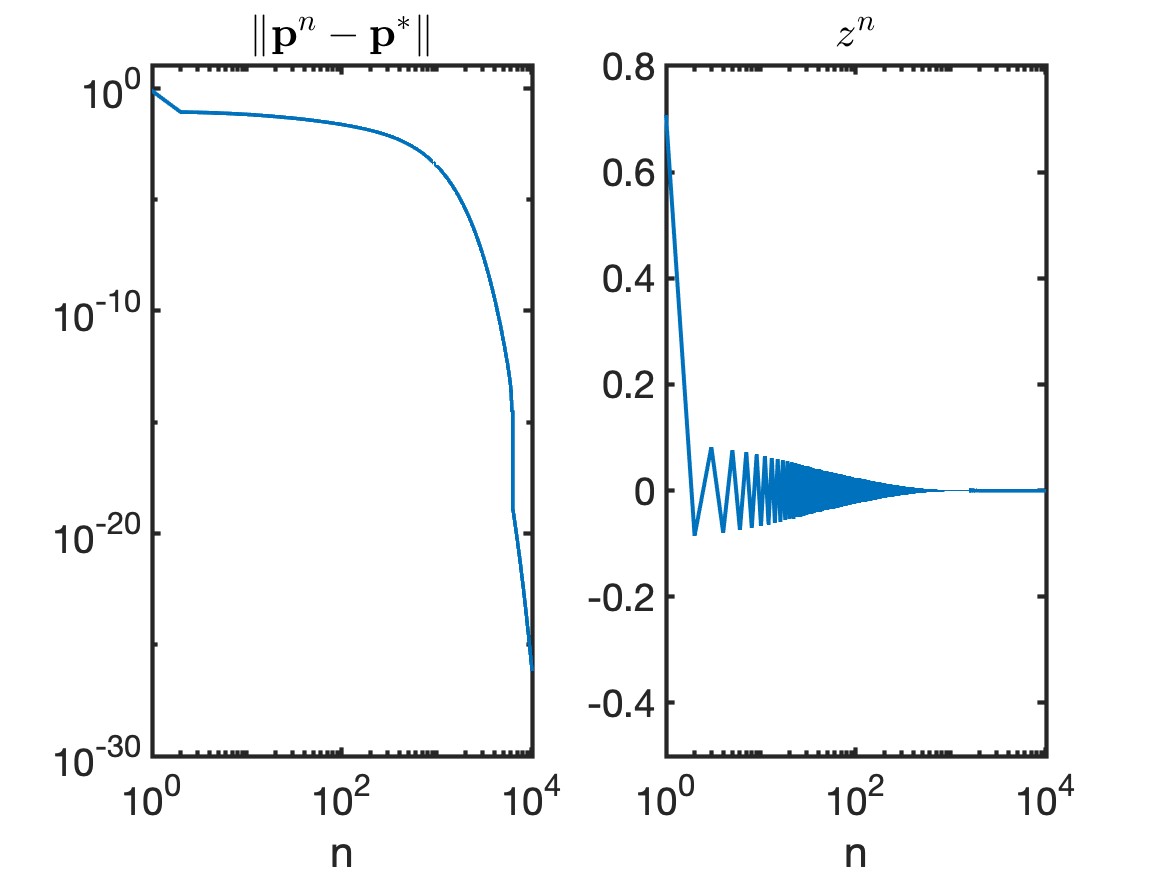}
(c-ii)\includegraphics[trim=20 0 70 0, clip, width=0.4\textwidth]{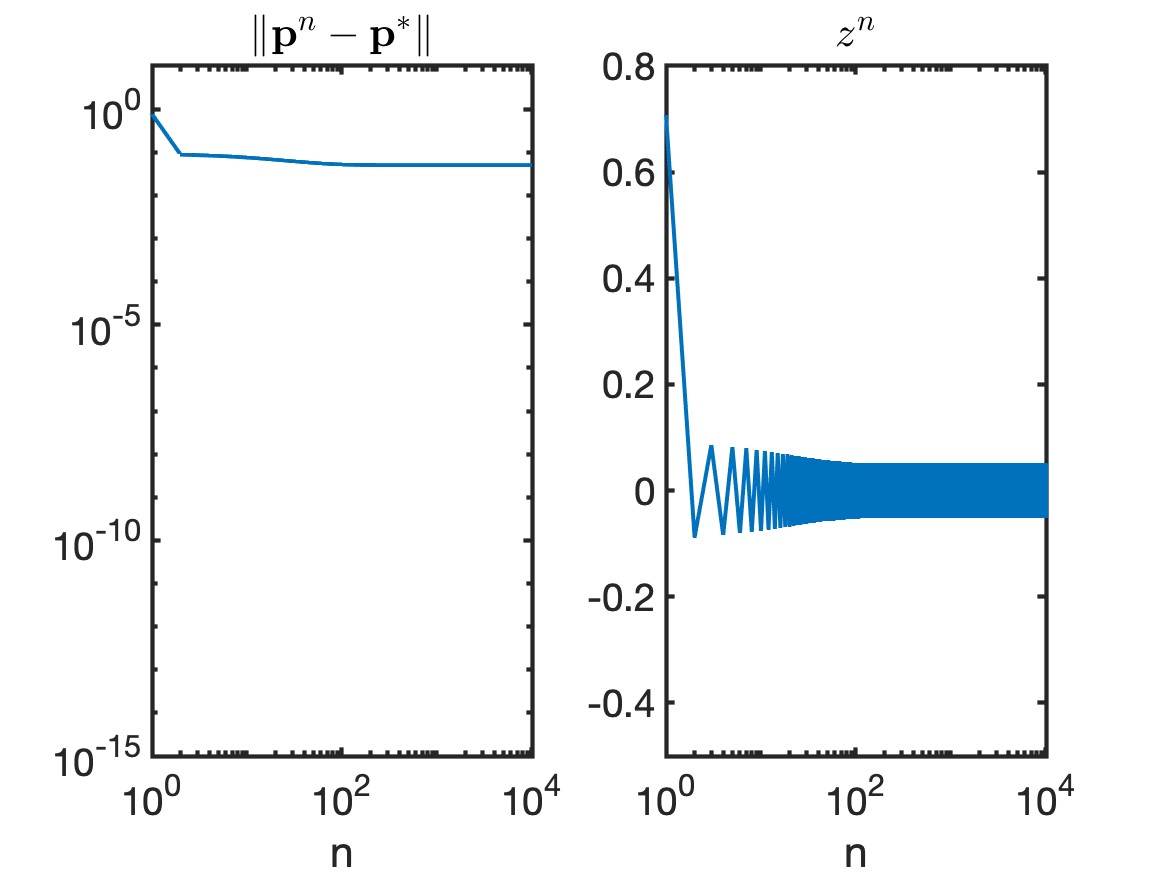} 
\caption{(Section \ref{Ex:Stability}) The numerical solutions obtained using (a) SFE, (b) STVDRK2, and (c) STVDRK3 are computed with time steps that both (i) satisfy and (ii) violate the A-stability condition. For the SFE and STVDRK2 methods, we adopt step sizes of 1.99 and 2.01. In the case of STVDRK3, we choose step sizes of 2.51 and 2.52.} 
\label{Fig:Stability}
\end{figure}

\subsection{Stability}
\label{Ex:Stability}

We demonstrate the significance of satisfying the A-stability-like condition in determining an appropriate time step size for a numerical method. As elaborated upon in Section \ref{SubSec:Stability}, we consider
$$
M=\left(
\begin{array}{ccc}
\frac{1}{2} & 0 & 0 \\
0 & -\frac{1}{2} &0 \\
0 & 0 & -\frac{1}{2}
\end{array}
\right) \, ,
$$
in the context of the linear ODE model problem (\ref{Eqn:LinearModelProblem}). This model problem possesses a sole equilibrium point at the origin, and the matrix $M$ is characterized by three eigenvalues. Among these, 1/2 and -1/2 each have a multiplicity of 2. The eigenvector associated with the eigenvalue 1/2 aligns with the $x$-direction (i.e., $\mathbf{e}_1$), contributing to the diverging component of the solution for the linear homogeneous problem. For the corresponding nonlinear ODE model problem (\ref{Eqn:NonlinearModelProblem}), the vectors $\mathbf{p} = \mathbf{e}_1$ and $-\mathbf{e}_1$ give rise to two equilibrium points on the unit sphere. The Jacobian matrix associated with these points on the tangent plane yields two eigenvalues of -1, indicating both equilibria are stable attractors. This highlights the importance of understanding the stability characteristics of numerical methods, especially when applied to problems with nonlinear dynamics.

Figure \ref{Fig:Stability} exhibits numerical solutions acquired through our proposed SFE, STVDRK2, and STVDRK3 methods employing two different step sizes. One step size adheres to the A-stability condition, while the other slightly exceeds the threshold. For instance, considering the SFE and STVDRK2 methods, we establish a time step size $h$ of less than 2. We perform simulations using $h=1.99$ and $2.01$, presenting the corresponding solutions in Figure \ref{Fig:Stability}(a1) and Figure \ref{Fig:Stability}(a2), respectively. In each figure, the left subplot illustrates the distance between intermediate solutions and the attractor $\mathbf{e}_1$ against the iteration number, while the right subplot depicts the third component of the solution ($z_3$). The solution with $h=1.99$ demonstrates favorable convergence towards the point $\mathbf{e}_1$, whereas the solution with $h=2.01$ diverges. For STVDRK3, we scrutinize solutions with $h=2.51$ and $2.52$, as illustrated in Figure \ref{Fig:Stability}(c). This analysis helps illustrate the effect of adhering to or deviating from the A-stability condition on the stability and convergence behavior of the numerical methods.

\subsection{Ray Tracing Solutions of the Surface Eikonal Equation}
\label{SubSec:SurfaceEikonal}

\begin{figure}[!htb]
\centering
\includegraphics[trim=40 0 80 0, clip, width=0.45\textwidth]{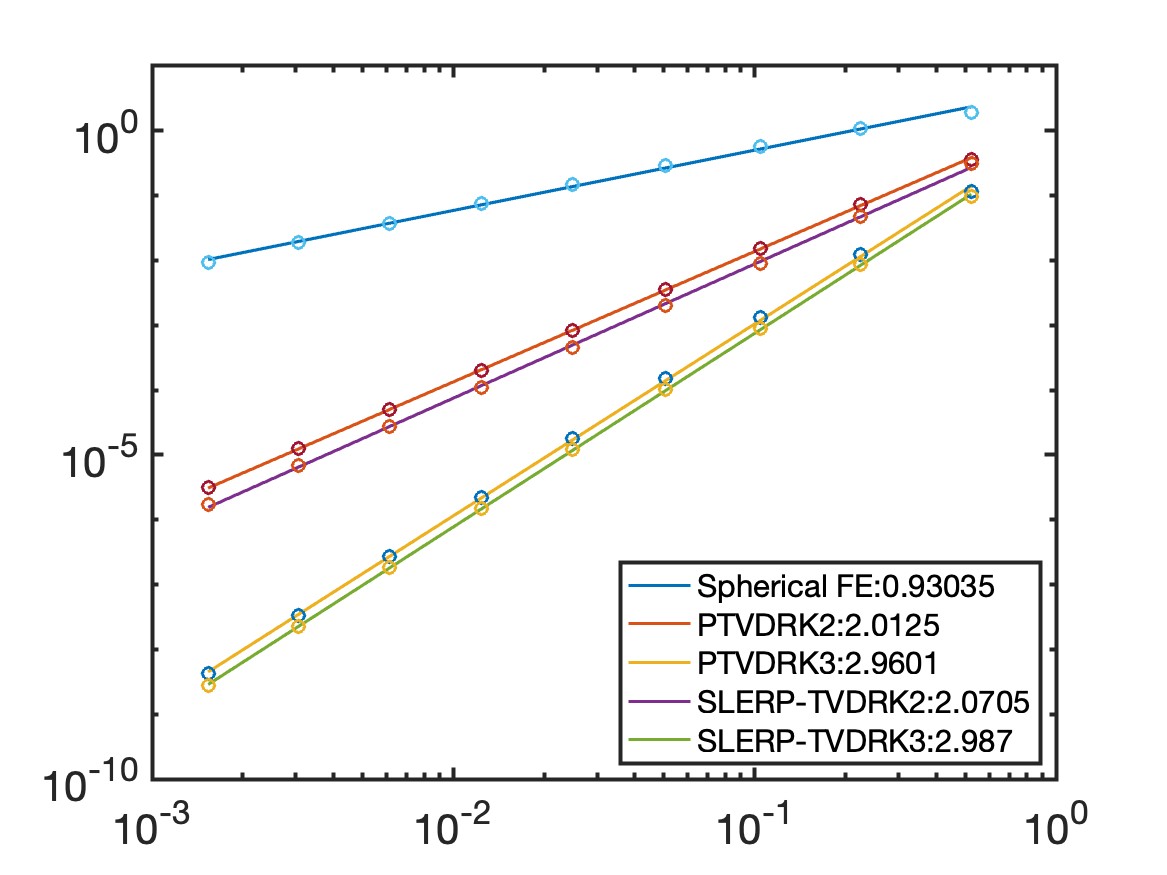}
\caption{(Section \ref{SubSec:SurfaceEikonal} with $v(\bx)=1$) $L_2$-errors in the solutions at $t=\pi/2$ obtained by various numerical integrators.}
\label{eik:converg}
\end{figure}

\begin{figure}[!htb]
\centering
\includegraphics[width=0.95\textwidth]{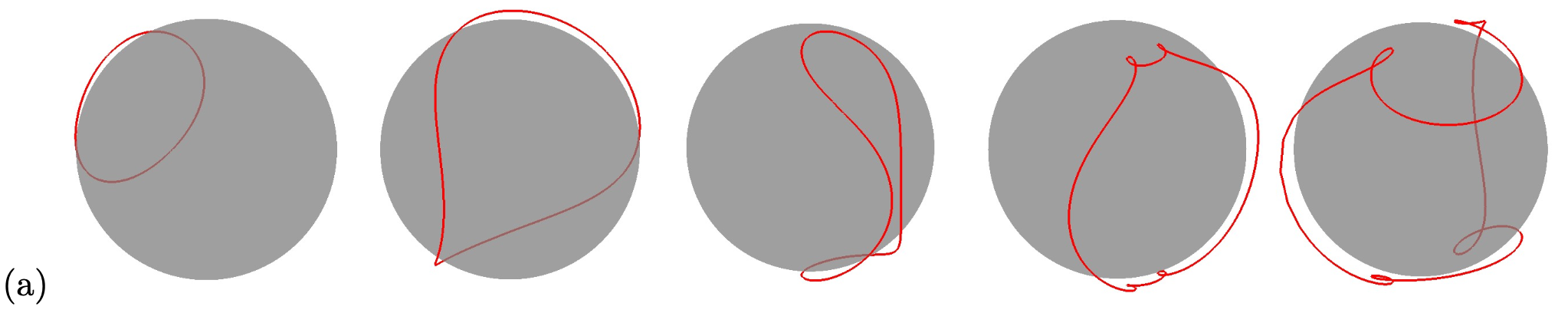} \\
\includegraphics[width=0.95\textwidth]{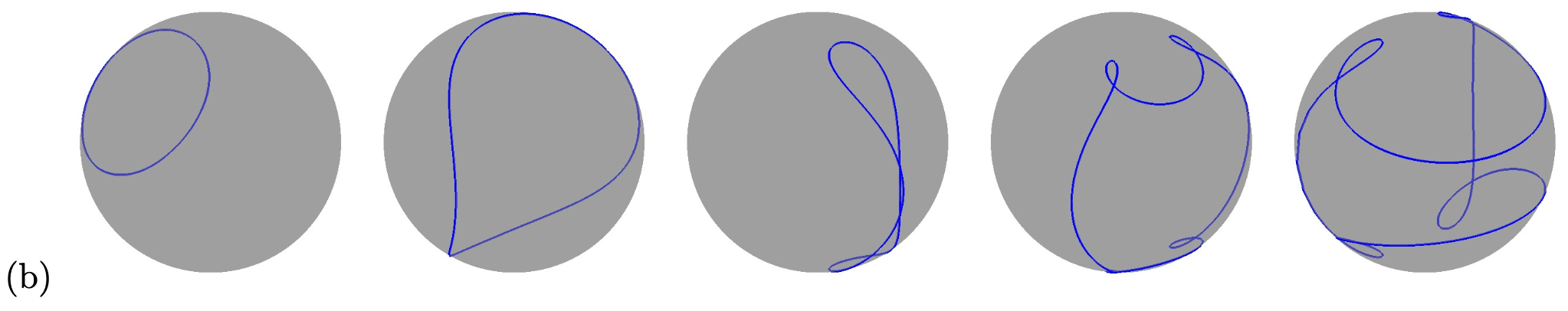} \\
{(c)} 
\includegraphics[width=0.54\textwidth]{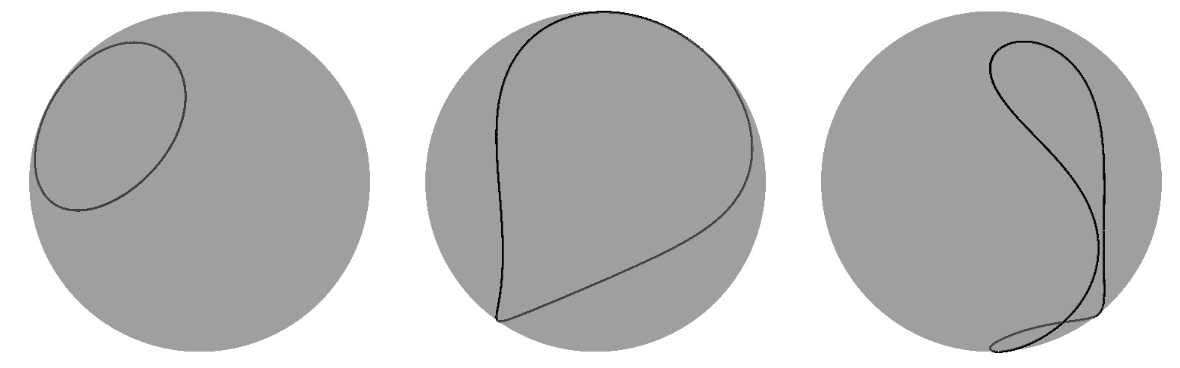} 
{(d)} 
\includegraphics[width=0.36\textwidth]{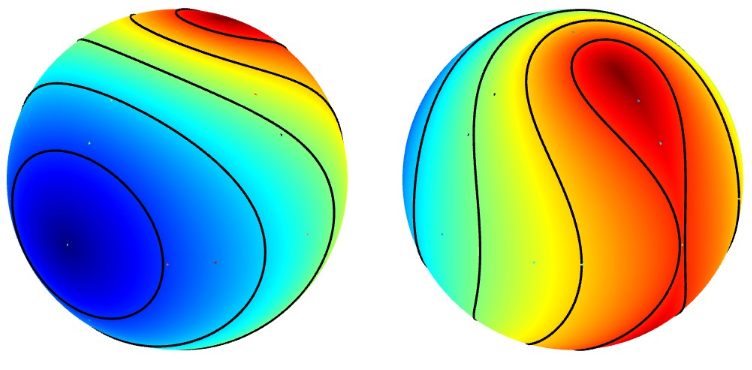} 
\caption{(Section \ref{SubSec:SurfaceEikonal} with $v(\bx)=\exp\left(-z^2\right)$) Wavefront at {$\Delta t$, $3\Delta t$, $5\Delta t$, $7\Delta t$ and $9\Delta t$ }with $\Delta t=\pi/5$ computed using (a) TVDRK3 for both $\bx$ and $\bp$, and (b) STVDRK3 for $\bx$ and TVDRK3 for $\bp$. {(c) Contour plots of the first-order accurate viscosity solution computed by \cite{wonleu16} with 641 grid points in each physical direction at  $\Delta t$, $3\Delta t$ and $5\Delta t$. (d) Contour plots of the first-order accurate viscosity solution computed by \cite{wonleu16} from two different angles.}
}
\label{eik:exponential}
\end{figure}

\begin{figure}[!htb]
\centering
\includegraphics[width=0.95\textwidth]{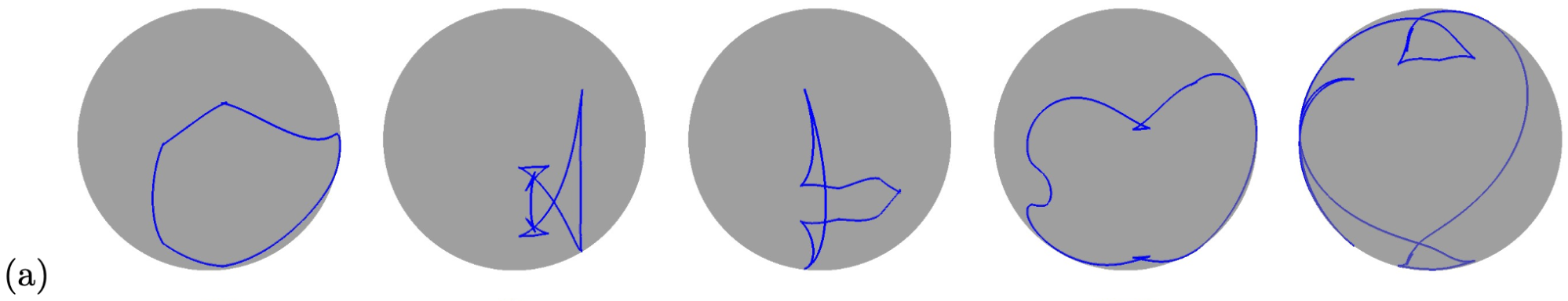} \\
\includegraphics[width=0.95\textwidth]{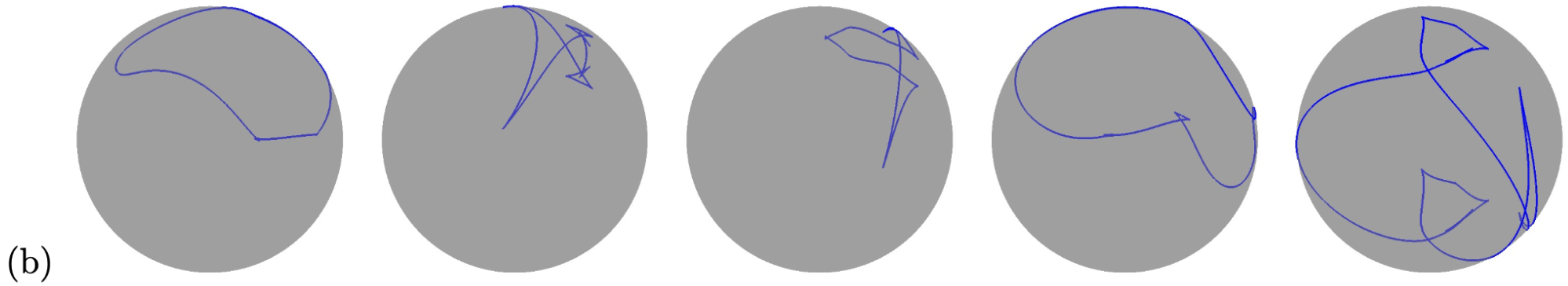} \\
{(c)} 
\includegraphics[width=0.54\textwidth]{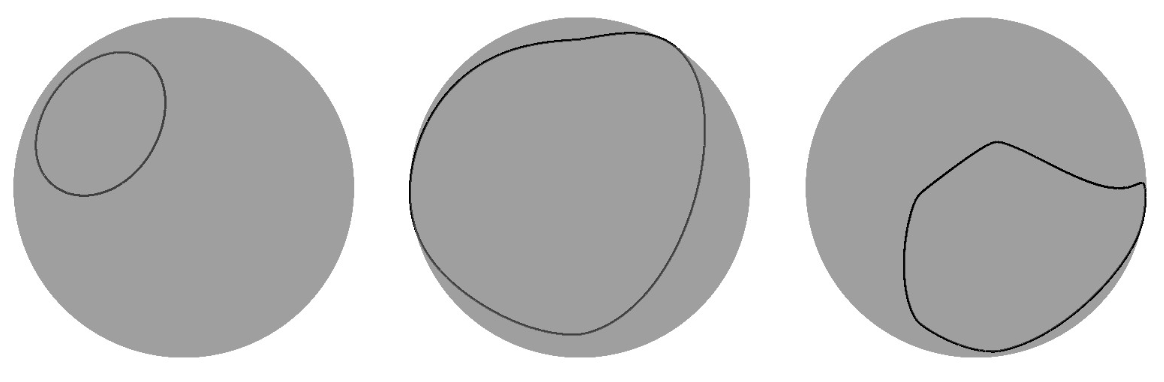}
{(d)} 
\includegraphics[width=0.36\textwidth]{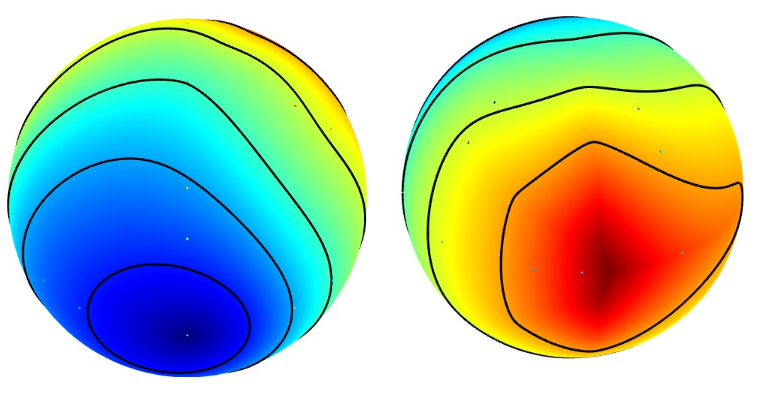} 
\caption{(Section \ref{SubSec:SurfaceEikonal} with $v(\bx)=1+Y_3^1(\theta,\rho)$) Wavefront at $75\Delta t$, $100\Delta t$, $125\Delta t$, $150\Delta t$ and $175\Delta t$ with $\Delta t=\pi/100$ computed using STVDRK3 for $\bx$ and TVDRK3 $\bp$. (a) and (b) are the solutions from different angles. {(c) Contour plots of the first-order accurate viscosity solution computed by \cite{wonleu16} with 641 grid points in each physical direction at $15\Delta t$, $45\Delta t$, and $75\Delta t$. (d) Contour plots of the first-order accurate viscosity solution computed by \cite{wonleu16} from two different angles.}
}
\label{eik:Y31}
\end{figure}

%
%
%

We consider the surface eikonal equation and develop a ray-tracing solver for high-frequency wave propagation. Our previous work, such as \cite{qialeu04,leuqiaosh0401,qialeu06,leuqiabur07}, has provided efficient numerical solvers for computing multivalued solutions to the eikonal equation. However, these numerical solutions are typically defined in Cartesian space. In this work, our main focus is on numerical solutions defined on a unit sphere. This is crucial for understanding wave motions on the Earth's surface, as it allows us to accurately model wave propagation phenomena in geophysical contexts. By considering the solutions on a sphere, we aim to capture the unique characteristics and behaviors of surface waves, providing valuable insights into their propagation patterns. The surface eikonal equation on a unit sphere \cite{gri68} is given by:
\begin{equation}
\left\| \nabla_{\mathbb{S}^2} u(\mathbf{x}) \right\| = \frac{1}{v(\mathbf{x})},
\label{Eqn:surf_eikonal}
\end{equation}
for $\mathbf{x} \in \mathbb{S}^2 \backslash P$, with the boundary condition $u(\mathbf{x}_s) = 0$ for $\mathbf{x}_s \in P$, where $P$ is the set of sources, and $v(\mathbf{x})$ is the wave velocity. Similar to the typical eikonal equation, the solution $u(\mathbf{x})$ denotes the travel time of the wave from the source $\mathbf{x}_s$ to a target point $\mathbf{x} \in \mathbb{S}^2$. The surface gradient $\nabla_{\mathbb{S}^2}$ is defined as the orthogonal projection of the usual gradient $\nabla$ onto the tangent plane of the sphere, i.e., $\nabla_{\mathbb{S}^2} u(\mathbf{x}) = \nabla u(\mathbf{x}) - \left[ \mathbf{n} \cdot \nabla u(\mathbf{x}) \right] \mathbf{n}$, where $\mathbf{n}$ is defined as the outward unit normal of the sphere given by $\mathbf{n} = \frac{\mathbf{x}}{\| \mathbf{x} \|}$. The corresponding Hamiltonian of the surface eikonal equation is given by $H(\mathbf{x}, \mathbf{k}, u) = \frac{1}{2} \left\{v(\mathbf{x})^2 \left[ \|\mathbf{k}\|^2 - (\mathbf{k} \cdot \mathbf{n})^2 \right] - 1\right\}$ which leads to the following ray tracing system:
\begin{align*}
\mathbf{x}'(t) &=f_1(\mathbf{x},\mathbf{k}) := \nabla_{\mathbf{k}} H = v(\mathbf{x})^2 \left[ \mathbf{k} - (\mathbf{x} \cdot \mathbf{k}) \frac{\mathbf{x}}{\| \mathbf{x} \|} \right]  \, , \\
\mathbf{k}'(t) &=f_2(\mathbf{x},\mathbf{k}) := -\nabla_{\mathbf{x}} H = \frac{v(\mathbf{x})^2 (\mathbf{x} \cdot \mathbf{k})}{\| \mathbf{x} \|} \left[ \mathbf{k} - \frac{(\mathbf{x} \cdot \mathbf{k})}{\| \mathbf{x} \|} \mathbf{x} \right] - \frac{1}{v(\mathbf{x})} \nabla v(\mathbf{x}) 
\, , \\
u'(t) &= \mathbf{k} \cdot \nabla_{\mathbf{k}} H = 1 \, .
\end{align*}
The third equation guarantees that the solution $u$ can be used as the parameterization along the ray and has the physical meaning as the phase of the high-frequency wave. The first two equations represent the location of the ray on the unit sphere at a specific time, and the vector $\mathbf{k}$ denotes the ray direction $\nabla u$ along the ray trajectory.

Numerically, since the trajectory $\mathbf{x}(t) \in \mathbb{S}^2$, we apply the STVDRK method to integrate the ODE. However, the gradient is generally a vector in $\mathbb{R}^3$. Therefore, we will use the TVDRK method with the corresponding order to update the gradient $\mathbf{k}(t)$. Taking the second-order scheme as an example, we have the following combination of STVDRK2 and TVDRK2 for $\mathbf{x}$ and $\mathbf{k}$, respectively,
\begin{align*}
	\begin{cases}
		\bq_1=\text{exp}_{\bx^n}(hf_1(\bx^n,\bk^n)) \\
		\bs_1=\bk^n+hf_2(\bx^n,\bk^n) \\
		\bq_2=\text{exp}_{\bq_1}(hf_1(\bq_1,\bs_1)) \\
		\bs_2=\bs_1+hf_2(\bq_1,\bs1) \\
		\bx^{n+1}=\text{SLERP}(\bx^n,\bq_2,0.5)\\
		\bk^{n+1}=\frac{1}{2}(\bk^n + \bs_2) \, .
	\end{cases}
\end{align*}

We consider a point source $\bx_s$ at (1,0,0) in the following numerical examples. To test the convergence of various schemes, we first assume $v(\bx)=1$ and $t=\frac{\pi}{2}$, and define the error measure $E_2=\left[ \int_C \left(\frac{\pi}{2}-d(\bx,\bx_s) \right)^2 dS \right]^{1/2}$. Here, $C$ represents the wavefront where $u(\bx)=\pi/2$, and the function $d(\cdot,\cdot)$ denotes the geodesic distance between two points on $\mathbb{S}^2$. Figure \ref{eik:converg} illustrates the measured errors in the ray tracing solutions obtained by SFE, PTVDRK2, PRVDRK3, STVDRK2, and STVDRK3. As expected, the integrator SFE exhibits first-order convergence behavior in its error, indicated by the top line. PTVDRK2 and STVDRK2 demonstrate clear second-order convergence, with STVDRK2 producing a more accurate solution than PTVDRK2. Finally, it is unsurprising that both PTVDRK3 and STVDRK3 display third-order convergence. In general, when PTVDRK and STVDRK methods have the same order of convergence, STVDRK methods provide more accurate solutions.

Figure \ref{eik:exponential} illustrates an inhomogeneous case where the velocity on the sphere is defined as $v(\bx)=\exp\left(-z^2\right)$. We compute the solution using two different third-order methods up to $t=2\pi$ with a relatively large timestep of $\Delta t=\pi/5$. Figure \ref{eik:exponential}(a) displays the solutions computed by TVDRK3 without any projection. The wavefront departs from the sphere right from the beginning, even when the curve is relatively smooth. In Figure \ref{eik:exponential}(b), we apply the STVDRK3 method to the position. By default, the wavefront remains on the sphere at all times.
{Additionally, we have also compared the ray-tracing solution with viscosity solutions computed using the first-order fast-sweeping method developed in \cite{wonleu16}. In Figure \ref{eik:exponential}(c-d), we present contour plots of the complete viscosity solution with the contour levels $t=\Delta t$, $3\Delta t$, and $5\Delta t$ and a colored contour plot of the whole viscosity solution. At $t=3\Delta t$, the ray-tracing solution closely aligns with the viscosity solutions computed with a fine mesh. As time progresses, the viscosity solution from the fast-sweeping method fails to capture the multi-valued solution developed at around $t=5\Delta t$. In contrast, the ray-tracing method can effectively capture all multiple arrival wavefronts even after $t=7\Delta t$.
}

Next, we consider a more complex velocity model given by $v(\bx)=1+Y_3^1(\theta,\varphi)$, where the function
$
Y_3^1(\theta,\varphi) = \frac{-1}{8}\sqrt{\frac{21}{\pi}} \cos\varphi \sin \theta \left( 5 \cos^2\theta-1 \right)
$
represents a spherical harmonic. Here, $(\theta,\varphi)$ denotes the spherical and azimuthal angles in the spherical coordinate representation of the point $\bx$. We apply STVDRK3 with a relatively small timestep of $\Delta t=\pi/100$ and obtain the wavefront solution at the final time of $1.75\pi$. Figure \ref{eik:Y31} presents the solution at different times from two angles. The wavefront's self-intersection is accurately captured, and we observe the development of sharp structures within the wavefront. {Similarly, we show the viscosity solutions computed using the first-order fast-sweeping method \cite{wonleu16} in Figure \ref{eik:Y31}(c-d). The viscosity solutions appear smoother than those obtained through the ray-tracing method. In contrast, the fast-sweeping method cannot capture the wavefront after $t=100 \Delta t$ representing the later arrival information. 
}


\begin{figure}[!htb]
\centering
(a) \includegraphics[trim=100 20 80 20, clip, width=0.45\textwidth]{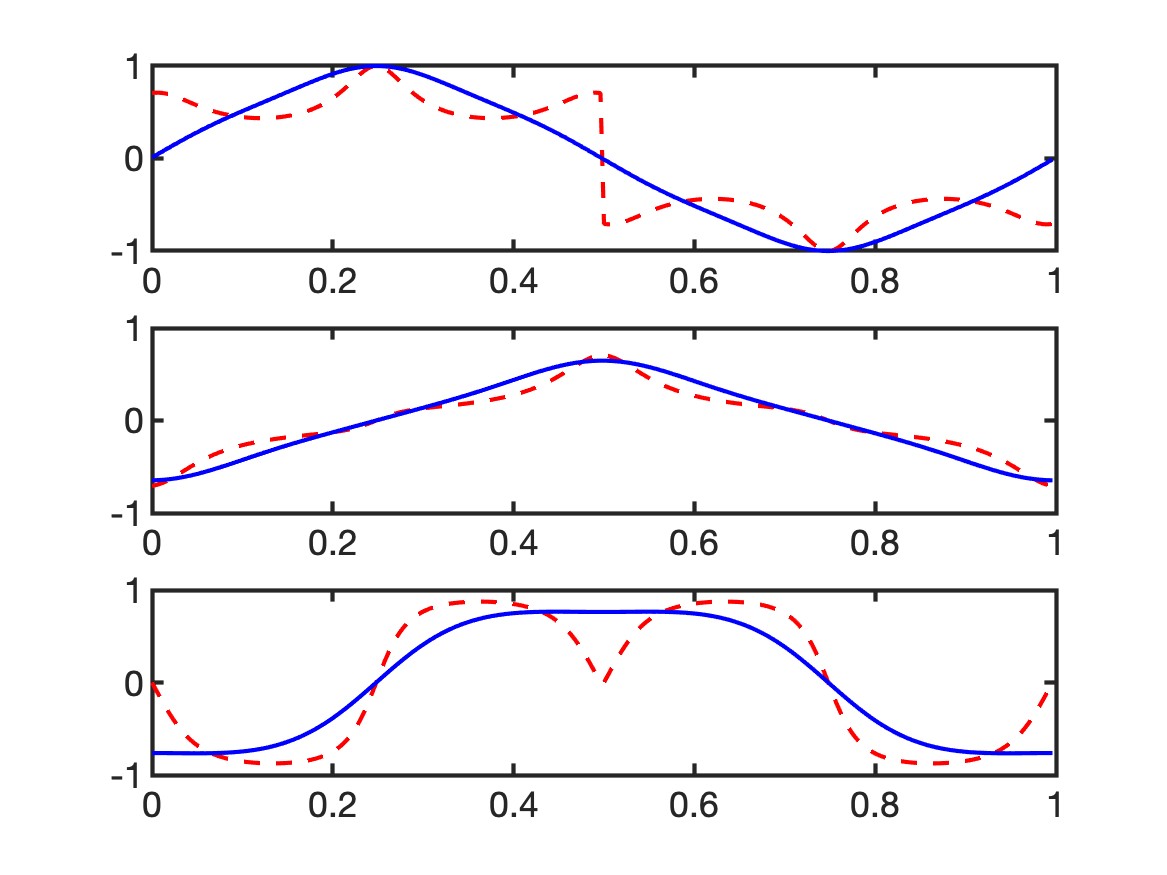}
\includegraphics[trim=100 20 80 20, clip, width=0.45\textwidth]{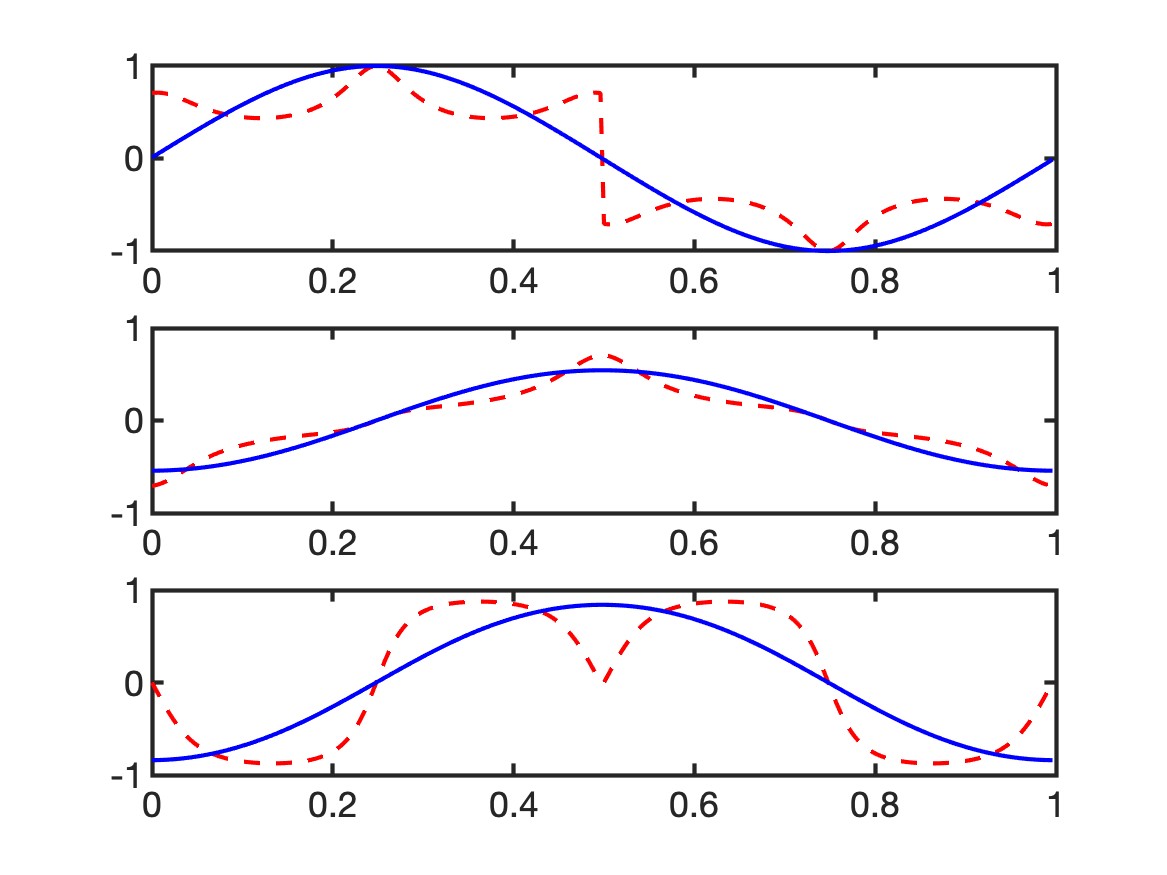} \\
(b) \includegraphics[trim=180 20 160 20, clip, width=0.42\textwidth]{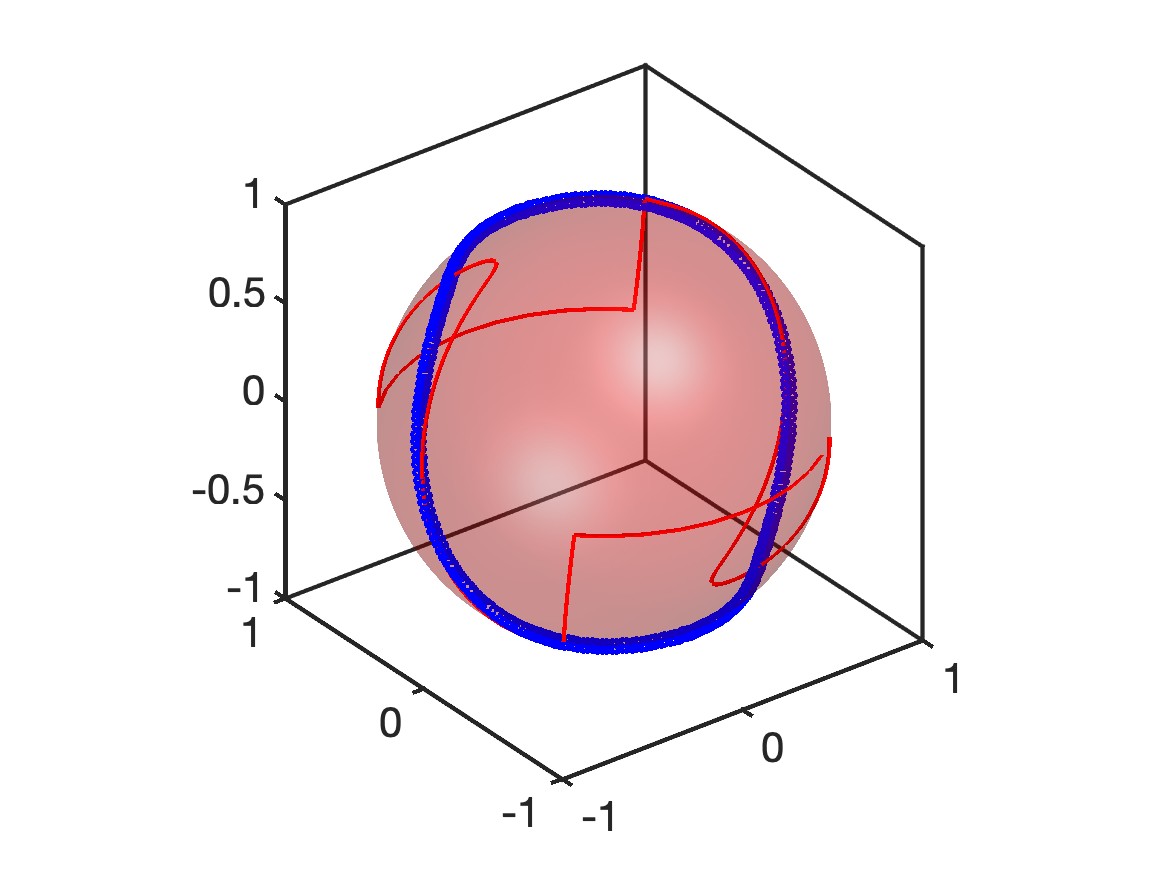}
\includegraphics[trim=180 20 160 20, clip, width=0.42\textwidth]{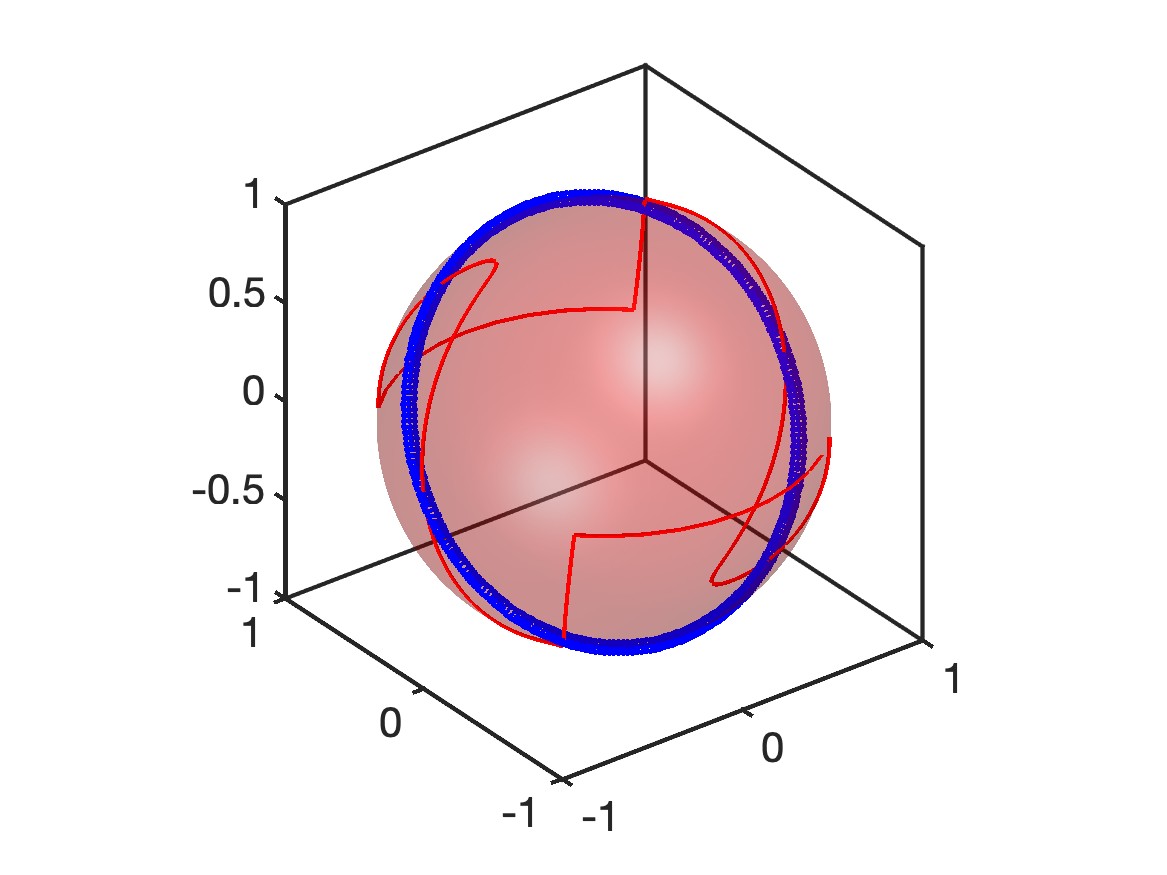}
\caption{(Section \ref{SubSec:ExpHarmonic}) STVDRK3 scheme for the $p$-harmonic flow is shown for $p=2$ at two different time levels. The initial condition is a discontinuous curve on the unit sphere, indicated by the red dashed lines. The solutions are plotted using blue circles connected by solid lines. (a) Individual component of $\m$. (b) Solution on the unit sphere.}
\label{Fig:DiscontinuousP2}
\end{figure}

\begin{figure}[!htb]
\centering
(a) \includegraphics[trim=100 20 80 20, clip, width=0.45\textwidth]{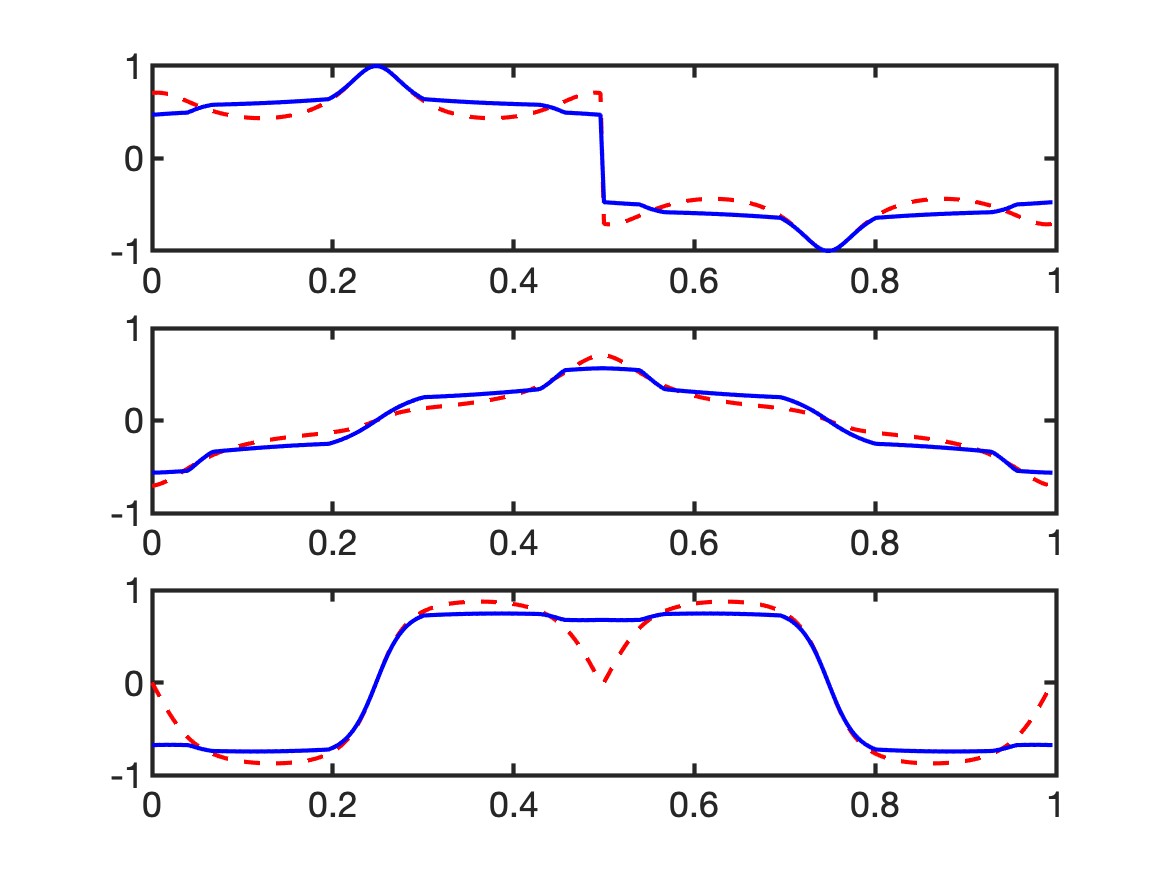}
\includegraphics[trim=100 20 80 20, clip, width=0.45\textwidth]{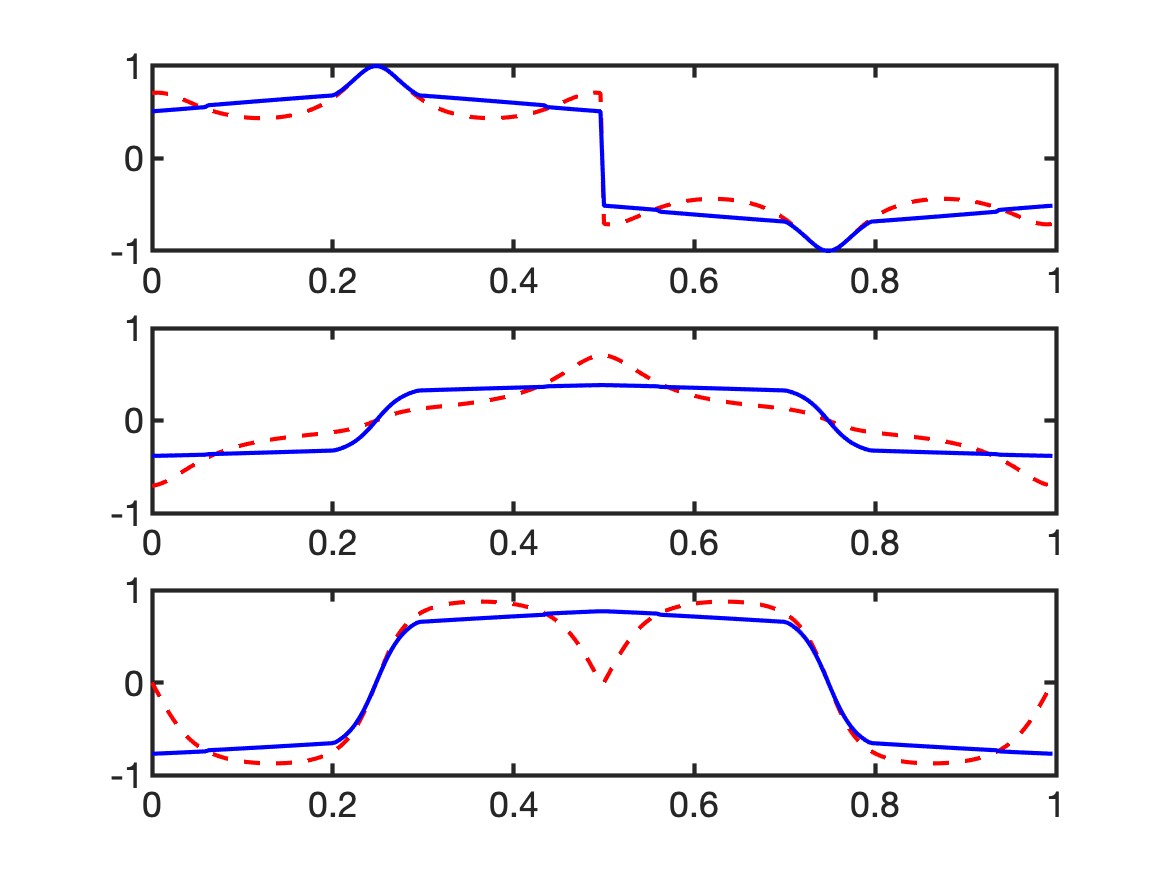} \\
(b) \includegraphics[trim=180 20 160 20, clip, width=0.42\textwidth]{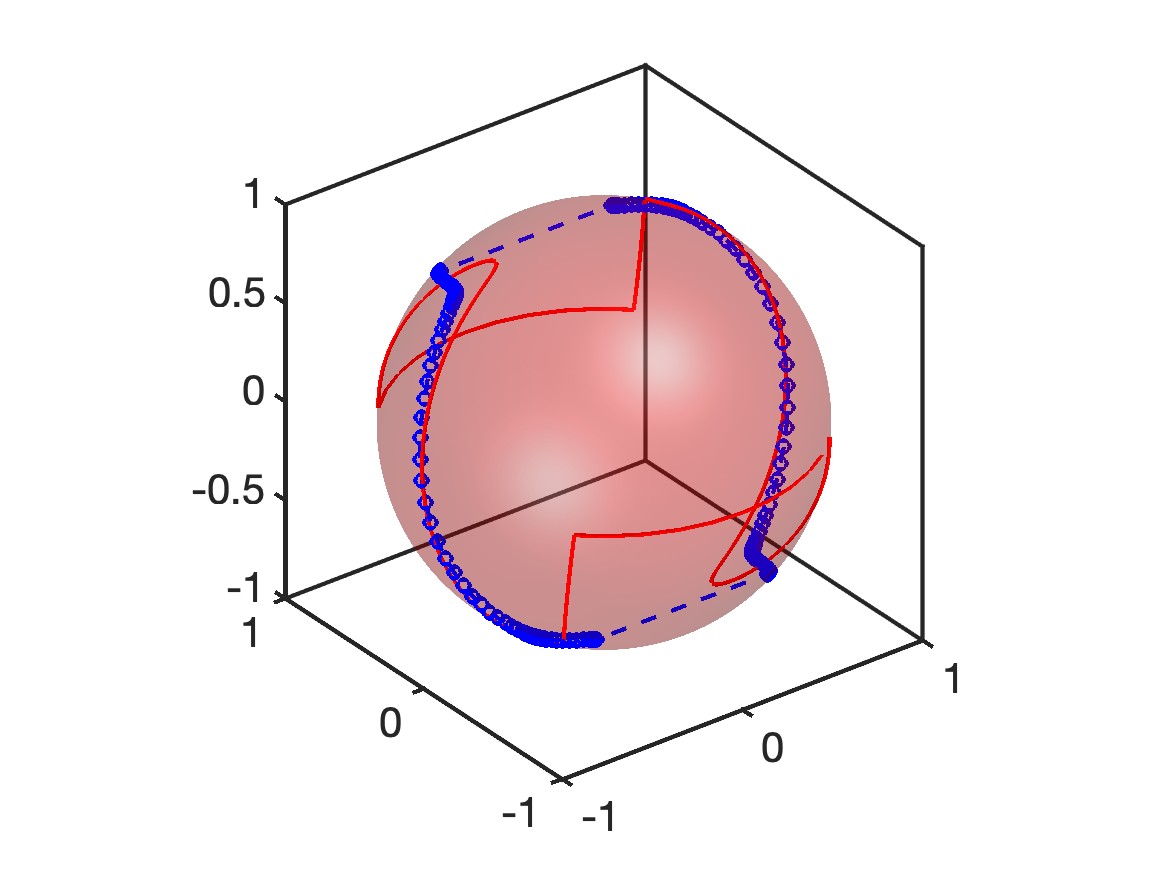}
\includegraphics[trim=180 20 160 20, clip, width=0.42\textwidth]{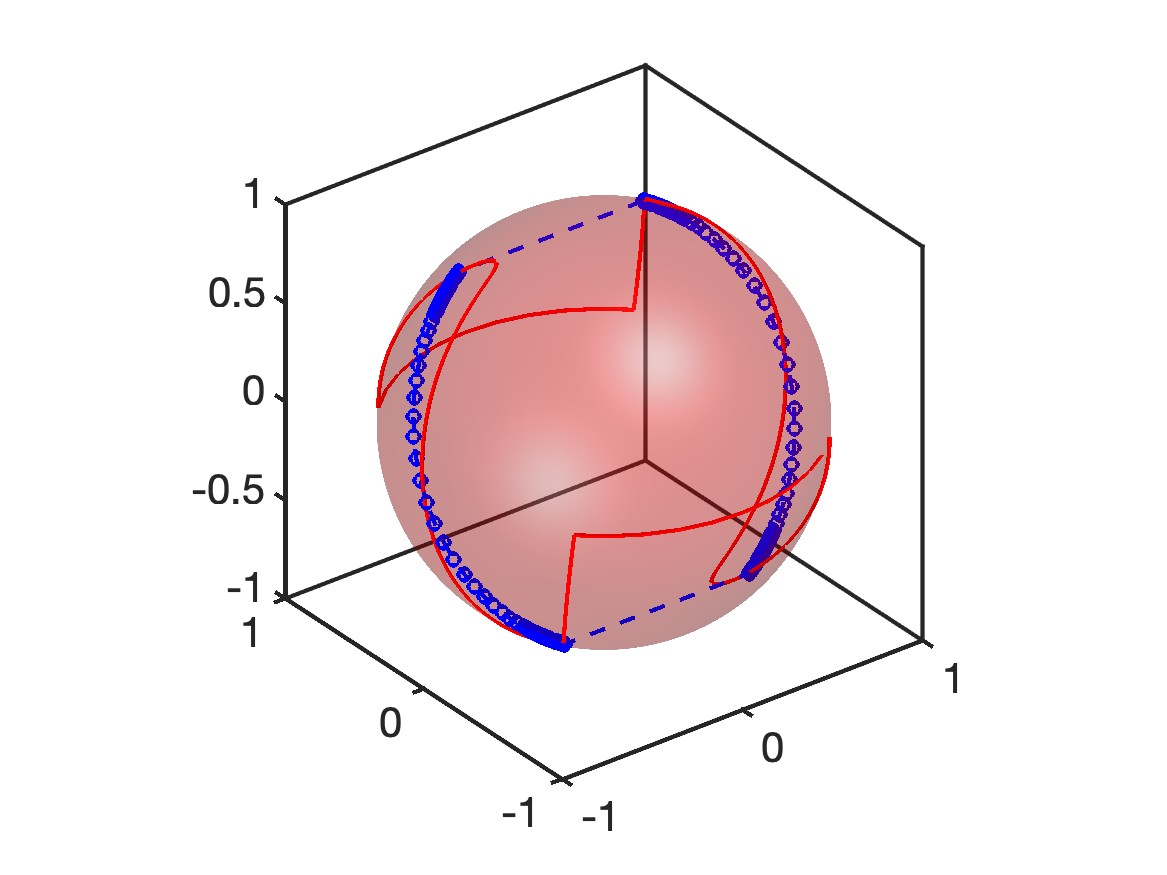}
\caption{(Section \ref{SubSec:ExpHarmonic}) STVDRK3 scheme for the $p$-harmonic flow is shown for $p=1$ at two different time levels. The initial condition is a discontinuous curve on the unit sphere, indicated by the red dashed lines. The solutions are plotted using blue circles connected by dashed lines. (a) Individual component of $\m$. (b) Solution on the unit sphere.}
\label{Fig:DiscontinuousP1}
\end{figure}

\subsection{$p$-Harmonic Flows}
\label{SubSec:ExpHarmonic}

The $p$-energy is given by $E_p(\mathbf{m}) = \frac{1}{p} \int_{\Omega} \| \nabla \mathbf{m} \|^p \, dV$ where $p \ge 1$. The vector $\mathbf{m}: \Omega \subset \mathbb{R}^m \rightarrow \mathbb{S}^{n-1}$ is a vector-valued function with $m \ge 1$ and $n \ge 2$ such that $\|\mathbf{m}\|=1$ almost everywhere $x \in \Omega$. The gradient of $\mathbf{m}$ is interpreted as
$$
\nabla \m = \left(
\begin{array}{c}
\nabla r_1 \\
\vdots \\
\nabla r_n 
\end{array}
\right)
\,
\mbox{ and the norm of $\nabla \m$ is given by }
\, 
\| \nabla \m \| = \sqrt{ \sum_{i=1}^n \sum_{j=1}^m \left( \frac{\partial r_i}{\partial x_j} \right)^2 } \, .
$$
The critical point of this $p$-energy is a map $\mathbf{m} \in C^1(\Omega, \mathbb{S}^{n-1})$ that solves the Euler-Lagrange equation $-\Delta_p \mathbf{m} = \|\nabla \mathbf{m}\|^p \mathbf{m}$, where $\Delta_p \mathbf{m} = \nabla \cdot \left( \|\nabla \mathbf{m}\|^{p-2} \nabla \mathbf{m} \right)$ is called the $p$-Laplacian of $\mathbf{m}$. This partial differential equation is singular elliptic when $1 \le p < 2$ and degenerate elliptic when $p > 2$. When $p=2$, the problem reduces back to the typical elliptic equation.

Introducing the time variable and interpreting the evolution as a heat flow, we can determine this critical point using the gradient descent approach by solving the following evolution equation
\begin{equation}
\frac{\partial \mathbf{m}}{\partial t} = \Delta_p \mathbf{m} + \|\nabla \mathbf{m}\|^p \mathbf{m},
\label{Eqn:p-harmonicflow}
\end{equation}
with given boundary and initial conditions, where $\mathbf{m}: \Omega \times (0,T) \rightarrow \mathbb{S}^{n-1}$ contains both spatial and temporal components. The corresponding evolution is called the $p$-harmonic flow. The resulting equation has many important applications in science and engineering. For instance, we can treat the vector-valued function $\mathbf{m}$ as the so-called chromaticity of a colored image and use the $p$-harmonic flow for color image denoising \cite{tansapcas00,tansapcas01,vesosh02,lysoshtai04,golwenyin09}. The resulting gradient flow in micromagnetics leads to the Landau-Lifshitz equation, which is crucial in understanding nonequilibrium magnetism \cite{ewan00,leirun22}. When applied to the molecular orientation in liquid crystals, a related equation can be found in the Ericksen-Leslie continuum theory for nematic liquid crystals \cite{chkll87,linlus89,alo97}.

In this work, we rewrite the gradient flow equation (\ref{Eqn:p-harmonicflow}) as $\m_t = \m \times ( \m \times \Delta_p \m)$
and discretize the spatial discretization on the right-hand side of the equation using standard finite difference. The time evolution is then handled using the proposed STVDRK method. The numerical scheme is guaranteed to provide a solution $\m(t)\in\mathbb{S}^2$. We consider a discontinuous initial condition, which we define using two curves on the planes $x=\pm 1$. These curves, denoted by $z=h_{\pm}(y)$, are given by $h_+(y)=\pm 2 \sin(\pi y)$, where $y$ is in the interval $[-1,1]$. We then project these curves onto the unit sphere, represented by $\mathbb{S}^2$. 

In Figures \ref{Fig:DiscontinuousP2} and \ref{Fig:DiscontinuousP1}, we present the numerical solutions of the $p$-Laplacian flow with $p=2$ and $p=1$, respectively. In the first row of each figure, we plot the individual components of $\mathbf{m}(t,s)$ at two different time levels. The $x$-axis of each plot represents the \textit{spatial} parameterization $s$, normalized to the interval $[0,1]$. The second row contains the solution on the unit sphere. For the case when $p=2$, we observe that the discontinuity is smoothed out almost immediately, resulting in a smoothly varying curve on the sphere. On the other hand, considering the 1-harmonic flow, the evolution can maintain the sharp discontinuity {as shown in the $x$-component of the solution in Figure \ref{Fig:DiscontinuousP1}(a)}, while eliminating small fluctuations along the curve.

\section{{Conclusions}}

{We have developed a novel class of numerical integrators designed for solving ODEs on a unit sphere. Our approach mimics the widely-used TVDRK methods commonly used for dynamics in Cartesian space. We have made two key modifications: firstly, substituting the forward Euler step with the exponential map of the unit sphere, and secondly, replacing the averaging steps in the intermediate stages with SLERP interpolations. This approach opens up several directions for future work. One direction is the development of STVDRK schemes with numerical accuracy of fourth-order or beyond. While our current focus is on spherical geometry, it is also intriguing to consider numerical integrators on general surfaces or manifolds.}

\section*{Declarations}
Leung acknowledges the support of the Hong Kong RGC under grants 16302223 and 16300524.

\section*{Data availability}
Enquiries about data availability should be directed to the authors.

\bibliographystyle{plain}
\bibliography{syleung}

\section*{Appendix A: A Proof of $\left\|\p^{\tiny \mbox{RK2}} \right\|= 1 +O(h^4)$}

We assume that the evolution stays on a plane. By introducing the coordinates system in the plane that the evolution stays in, without loss of generality, assume the unit sphere is centered at the origin, and $\mathbf{p}^n$ is at $(0,-1)$. The first step of the TVDRK2 method arrives at $\p^{(1)}=\p^n+\mathbf{a}h$, where $\mathbf{a}=f(\p^n)$. We denote the second step by $\p^{(2)}=\p^{(1)}+\mathbf{b} h$, where $\mathbf{b}=f(\p^{(1)})$. Since the velocity is tangential to the sphere, we have $\p^n \perp \mathbf{a}$ and $\p^{(1)} \perp \mathbf{b}$. Finally, the TVDRK2 update is given by the average of $\p^n$ and $\p^{(2)}$.

\begin{figure}[!htb]
\centering
\includegraphics[width=0.8\textwidth]{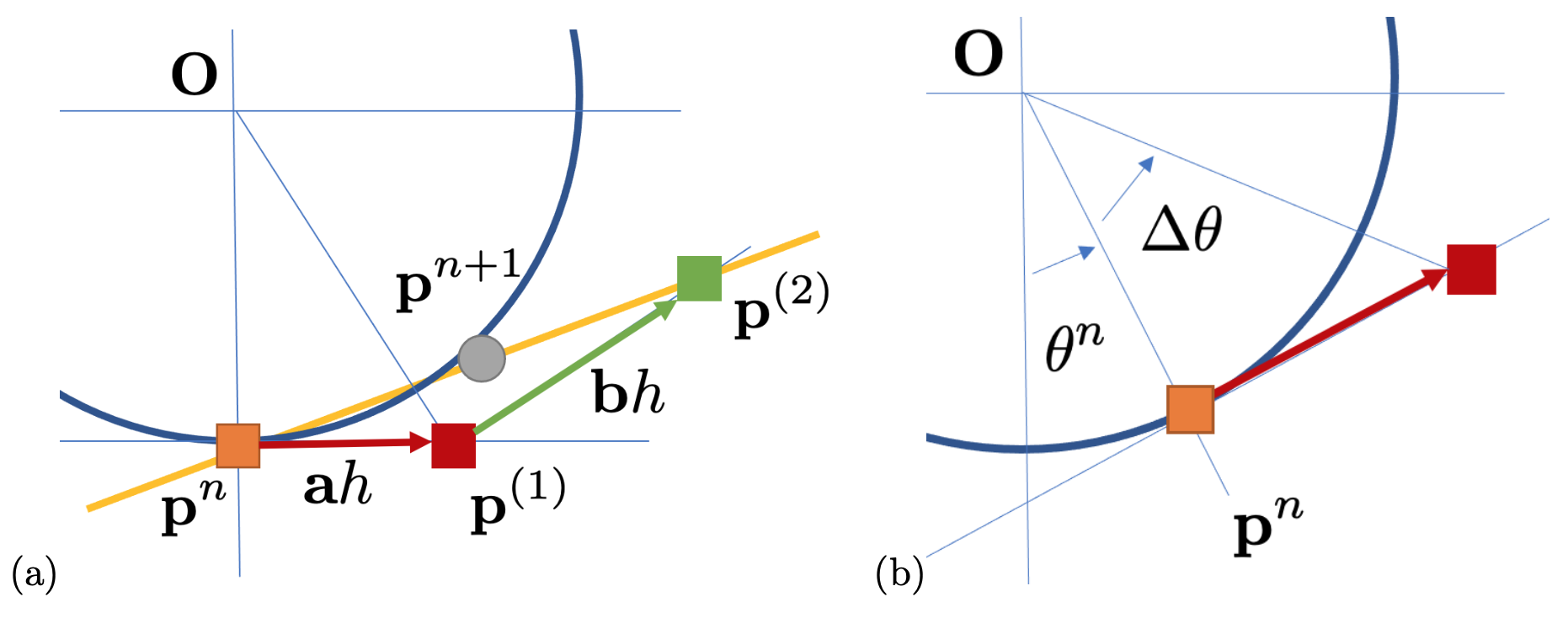}
\caption{Setting to show (a) $\left| \left\|\p^{\tiny \mbox{RK2}}\right\| - 1 \right|= O(h^4)$ and (b) PFE is first order accurate.} 
\label{Fig:TVDRK2_2}
\end{figure}

Denote $\Vert \mathbf{a} \Vert$ and $\Vert \mathbf{b} \Vert$ as $a$ and $b$. As illustrated in Figure \ref{Fig:TVDRK2_2}(a), the red vector can be expressed as $\mathbf{a}h = \langle ah,0 \rangle$. Consequently, $\mathbf{p}^{(1)}$ can be computed as $\mathbf{p}^{(1)} = \mathbf{p}^n+\mathbf{a}h=(ah,-1)$. Thus vector $\overrightarrow{Op^{(1)}}$ is determined as $\langle ah,-1 \rangle$. The green vector, that is $\mathbf{b}h$, is perpendicular to $\overrightarrow{Op^{(1)}}$ and has a positive $x$-component, hence it is pointing towards the direction of $\langle 1,ah \rangle$ . Together with the magnitude $bh$, the green vector can be represented as
$\mathbf{b}h = bh\frac{\langle 1,ah \rangle}{\sqrt{1+(ah)^2}}$. Then $\mathbf{p}^{(2)}$ can be computed as $\mathbf{p}^{(2)}=\mathbf{p}^{(1)}+\mathbf{b}h = \left(  ah+\frac{bh}{\sqrt{1+(ah)^2}}, -1+\frac{abh^2}{\sqrt{1+(ah)^2}} \right)$. Finally, $\mathbf{p}^{n+1}$ (i.e., the midpoint of $\overline{\mathbf{p}^n\mathbf{p}^{(2)}}$) can be computed as 
$
\mathbf{p}^{n+1} = \frac{1}{2}\left[\mathbf{p}^n+\mathbf{p}^{(2)}\right] =  \left(  \frac{ah}{2}+\frac{bh}{2\sqrt{1+(ah)^2}}, -1+\frac{abh^2}{2\sqrt{1+(ah)^2}} \right).
$
Now using Taylor's expansion, the norm of $\mathbf{p}^{n+1}$, can be computed as $\Vert \mathbf{p}^{n+1} \Vert =1+\frac{1}{8}(a-b)^2h^2-\frac{1}{128}\left( (a-b)^4+16a^3b \right)h^4 + O(h^6)$.
Since the velocity field $f$ is Lipschitz, we have $|a-b| = | \|f(\p^n)\|-\|f(\p^{(1)})\| | \le \|f(\p^n) - f(\p^{(1)})\| \le K \| \p^n - \p^{(1)}\| \le K h a =O(h)$. This estimate then implies $L=1+O(h^4)$. Since the local error in the norm is $O(h^4)$, we expect the global error in the solution norm is $O(h^3)$.

If not all intermediate steps are on the same plane, we can consider the plane passing through the center of the sphere, the point $\p^n$, and the second intermediate stage $\p^{(2)}$. We can replace the point $B$ in the above argument by projecting $\p^{(1)}$ onto the above plane. Then, a similar argument will follow.

\section*{Appendix B: A Proof of the Internal Projection Method PTVDRK3' is only Second-Order Accurate}

Assuming we have motion on the great circle of the unit sphere, the PTVDRK3' scheme involves intermediate stages that lie on the unit sphere. To represent the numerical solutions, we introduce the angle representation. Consider the velocity given by $\theta'=\theta$, which yields the exact solution $\theta(h)=\theta_0 \exp(h) = (1+h+\frac{1}{2}h^2+\frac{1}{6}h^3+\cdots) \theta_0$. The PFE step $\p^{n+1} = \mathcal{P}(\p^n + h f(\p^n,t^n))$ introduces a change in the polar angle denoted by $\Delta \theta = \tan^{-1}(h\theta^n)$, as shown in Figure \ref{Fig:TVDRK2_2}(b). This implies that $\theta^{n+1} = \theta^n + \tan^{-1}(h\theta^n) = \left(1 + h -\frac{1}{3} h^3 + \frac{1}{5} h^5 +\cdots \right) \theta^n$, making it a first-order scheme. Let $g$ be the expansion $g =1 + h -\frac{1}{3} h^3 + \frac{1}{5} h^5 +\cdots$. The PTVDRK2' scheme can be represented as $\theta_1=g \theta^n$, $\theta_2= g^2 \theta^n$, and therefore $\theta^{n+1}=\frac{1}{2} \left( 1+ g^2 \right) \theta^n = \left(1+h+\frac{1}{2} h^2 - \frac{1}{3} h^3+ \cdots \right) \theta^n$, indicating that PTVDRK2' is a second-order scheme. Finally, the PTVDRK3' scheme gives $\theta_1=g\theta^n$, $\theta_2= g^2 \theta^n$, $\theta_3 = \frac{1}{4} \left( 3+ g^2 \right) \theta^n$, $\theta_4 = \frac{1}{4} \left( 3+ g^2 \right) g \theta^n$. This implies $\theta^{n+1} =\left( 1+h+\frac{1}{2} h^2 - \frac{1}{6} h^3 -\cdots \right) \theta^n$. Since the sign of the third-order term is negative, the scheme is only second-order accurate.

\end{document}